\def\eps{\varepsilon}
\def\R{{\mathbb R}}
\newcommand{\diverg}{\operatorname{div}}
\newcommand{\norm}[1]{\left\Vert#1\right\Vert}
\newcommand{\set}[1]{\left\{#1\right\}}
\newcommand{\C}{\mathbb C}
\newcommand{\dd}{\;\mathrm{d}}
\DeclareMathOperator{\supp}{supp}
\newtheorem{lemma}{Lemma}
\newtheorem{theorem}[lemma]{Theorem}
\newtheorem{corollary}[lemma]{Corollary}
\newtheorem{proposition}[lemma]{Proposition}
\theoremstyle{definition}
\newtheorem{definition}[lemma]{Definition}
\theoremstyle{remark}
\newtheorem{remark}[lemma]{Remark}
\DeclareRobustCommand\widecheck[1]{{\mathpalette\@widecheck{#1}}}
\def\@widecheck#1#2{%
    \setbox\z@\hbox{\m@th$#1#2$}%
    \setbox\tw@\hbox{\m@th$#1%
       \widehat{%
          \vrule\@width\z@\@height\ht\z@
          \vrule\@height\z@\@width\wd\z@}$}%
    \dp\tw@-\ht\z@
    \@tempdima\ht\z@ \advance\@tempdima2\ht\tw@ \divide\@tempdima\thr@@
    \setbox\tw@\hbox{%
       \raise\@tempdima\hbox{\scalebox{1}[-1]{\lower\@tempdima\box
\tw@}}}%
    {\ooalign{\box\tw@ \cr \box\z@}}}
\begin{document}

\title[Non-Unique Admissible Solutions of the Compressible Euler Equations]%
{Non-Unique Admissible Weak Solutions of the Compressible Euler Equations with Compact Support in Space}
\author[I. Akramov]{Ibrokhimbek Akramov}
\address{\textit{Ibrokhimbek Akramov:} Institute of Applied Analysis, Ulm University, Helmholtzstraße~18, 89081 Ulm, Germany}
\email{ibrokhimbek.akramov@uni-ulm.de}

\author[E. Wiedemann]{Emil Wiedemann}
\address{\textit{Emil Wiedemann:} Institute of Applied Analysis, Ulm University, Helmholtzstraße~18, 89081 Ulm, Germany}
\email{emil.wiedemann@uni-ulm.de}
\begin{abstract}
This paper is concerned with the existence of compactly supported admissible solutions to the Cauchy problem for the isentropic compressible Euler equations. In more than one space dimension, convex integration techniques developed
by De Lellis-Sz\'ekelyhidi and Chiodaroli enable us to prove failure of uniqueness on a finite time-interval for admissible solutions starting from any continuously differentiable initial density and suitably constructed bounded initial momenta. In particular, this extends Chiodaroli's work from periodic boundary conditions to bounded domains or the whole space.
\end{abstract}
\maketitle
\section{Introduction}
In this paper, we consider the isentropic compressible Euler system consisting of $(n+1)$ equations
\begin{equation}\label{comm.euler}
\begin{cases}
\partial_t\rho+\diverg_x m=0, \\
\partial_t m+\diverg_x\left(\frac{m\otimes m}{\rho}\right)+\nabla_x[p(\rho)]=0,\\
\rho(\cdot, 0)=\rho^0,\quad m(\cdot, 0)=m^0,
\end{cases}
\end{equation}
where $m$ is the momentum and $\rho$ is the density of a gas. The pressure $p$ is a function of $\rho$, which is determined from the constitutive thermodynamic relations of the gas and is assumed continuously differentiable on $(0, \infty)$ with  $p'(\rho)> 0$  throughout the paper. The latter condition makes the system strictly hyperbolic on the set of admissible values $\{\rho>0\}$ (cf.~\cite{Dafermos}). Furthermore, thermodynamically admissible processes must satisfy an additional constraint given by the energy inequality
\begin{equation}\label{eq1.2}
\partial_t\left(\rho\eps(\rho)+\frac12\frac{|m|^2}{\rho}\right)+\diverg_x\left[\left(\eps(\rho)+\frac12\frac{|m|^2}{\rho^2}+\frac{p(\rho)}{\rho}\right)m\right]\leq 0,
\end{equation}
where the internal energy $\eps:\R^+\to\R$ is given through the law $p(r)=r^2\eps'(r)$. 

Let $T$ be a fixed positive time. By a bounded weak solution of \eqref{comm.euler} we mean a pair $(\rho, m)\in L^\infty(\R^n\times(0,T))$ 
such that the following identities hold for every test function $\psi\in C_c^\infty([0, T); C_c^\infty(\R^n))$, and for any vector field $\phi\in C_c^\infty([0, T); C_c^\infty(\R^n))$:
\begin{equation}\label{eq2'.1}
\int_0^T\int_{\R^n}\left[\rho\partial_t\psi+m\cdot\nabla_x\psi\right]\dd x\dd t+\int_{\R^n}\rho^0(x)\psi(x, 0)\dd x=0
\end{equation}
\begin{equation}\label{eq2.4ch}
\begin{split}
\int_0^T\int_{\R^n}&\left[m\partial_t\phi+\left\langle\frac{m\otimes m}{\rho}, \nabla_x\phi \right\rangle+p(\rho)\diverg_x\phi \right]\dd x\dd t\\&\quad+\int_{\R^n}m^0\phi(x,0)\dd x=0.
\end{split}
\end{equation}
It is tacitly assumed, as part of the definition, that all the integrals are well-defined (if $\rho$ is bounded below by a positive constant, this will automatically be the case). 

Weak solutions satisfying~\eqref{eq1.2} in the sense of distributions represent a special case of \emph{entropy solutions}, as have been studied for decades in the theory of hyperbolic conservation laws. They were long viewed as the solution paradigm for conservation laws, giving rise to a very satisfactory well-posedness theory at least in the scalar case~\cite{kruzhkov}. However, the expected uniqueness of entropy solutions for the Cauchy problem for systems of conservation laws was disproved in the groundbreaking work of De Lellis-Sz\'ekelyhidi~\cite{Delelissekhidieuler}, who gave examples of non-unique entropy solutions for~\eqref{comm.euler} with piecewise constant density. Their convex integration scheme was later refined, e.g., in~\cite{Chiodaroli, CDK, feireisl16, MarkKling} to yield larger classes of non-unique solutions, particularly for the Riemann problem.

Our aim in this paper is to construct non-unique compactly supported solutions to~\eqref{comm.euler}, in the sense that the momentum $m$ has compact support in space for every time, and the density $\rho$ is constant outside a compact set in space for every time. More precisely, we construct such solutions which are \emph{semi-stationary}, i.e., $\rho$ is independent of time. The semi-stationary system has been studied in~\cite{Chiodaroli}, but only under periodic boundary conditions. Although the specific examples of~\cite{Delelissekhidieuler} already have compact support in the mentioned sense, we extend the range of compactly supported non-unique solutions to a much wider class of (smooth) initial densities. Note that this allows us to view our solutions either as solutions on all of $\R^n$ or as solutions on any bounded domain containing the support of $m$ and $\rho-\overline{\rho}$. Our work can thus be seen as a proper extension of the results in~\cite{Chiodaroli}.

More precisely, our results are as follows:




\begin{theorem}\label{Th2}
Let $n\geq 2$, $\Omega\subset\R^n$ a bounded open set, $T>0$, and $\Omega'\supset\supset\Omega$ locally Lipschitz. Assume that $\rho^0\in C^1(\R^n)$ is a positive function satisfying $\rho^0(x)=\overline{\rho}>0$ for $x\in \R^n\backslash\Omega$ and the pressure $p\in C^1(\R^n)$ with $\int_{\Omega}p(\rho_0(x))\dd x=p(\overline{\rho})|\Omega|$.
There exists a bounded initial momentum $m^0$ with $\supp(m^0)\subset \Omega'$ for which there are infinitely many weak solutions $(\rho, m)\in C^1(\R^n)\times C([0, T); H_w(\R^n))$ of 
\begin{equation}\label{com.euler}
\begin{cases}
\diverg_x m=0, \\
\partial_t m+\diverg_x\left(\frac{m\otimes m}{\rho}\right)+\nabla_x[p(\rho)]=0,\\
 m(\cdot, 0)=m^0
\end{cases}
\end{equation}
on $\R^n\times[0, T)$ with density $\rho(x)=\rho^0(x)$. Moreover, the obtained weak solutions $m$ satisfy
\begin{equation*}
|m(x,t)|^2=\rho^0(x)\chi(t)\mathbbm{1}_{\Omega'}\quad a.e. \quad\text{in}\quad\R^n\times[0, T),
\end{equation*}
\begin{equation}\label{eq2.10}
|m^0(x)|^2=\rho^0(x)\chi(0)\mathbbm{1}_{\Omega'}\quad a.e. \quad\text{in}\quad\R^n
\end{equation}
for some smooth function $\chi:\R\to\R$.
\end{theorem}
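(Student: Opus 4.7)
\emph{Plan.} The argument follows the convex integration strategy developed by De Lellis--Sz\'ekelyhidi~\cite{Delelissekhidieuler} and adapted to the isentropic compressible setting by Chiodaroli~\cite{Chiodaroli}. The novelty, compared to those works, is that the strict subsolution must be compactly supported in space rather than periodic; the hypothesis $\int_\Omega p(\rho^0)\,dx = p(\bar\rho)|\Omega|$ will enter as the Fredholm-type solvability condition for this compactification.

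\emph{Reformulation.} Introduce the trace-free symmetric field $M = \tfrac{m\otimes m}{\rho} - \tfrac{|m|^2}{n\rho}I$ and the modified pressure $q = p(\rho) + \tfrac{|m|^2}{n\rho}$. Then \eqref{com.euler} is equivalent to the linear system
\begin{equation*}
\diverg_x m = 0, \qquad \partial_t m + \diverg_x M + \nabla_x q = 0,
\end{equation*}
together with the pointwise nonlinear constraint $\lambda_{\max}\!\bigl(\tfrac{m\otimes m}{\rho} - M\bigr) = \tfrac{|m|^2}{n\rho}$. The convex integration lemma (presumably established earlier in the paper in the spirit of \cite{Delelissekhidieuler, Chiodaroli}) then states that any smooth \emph{strict subsolution} $(\bar m,\bar M,\bar q)$ of the linear system with $\lambda_{\max}(\bar m\otimes\bar m/\rho^0 - \bar M) < \chi(t)/n$ pointwise on an oscillation region $U\subset \R^n\times[0,T)$ can be perturbed into infinitely many bounded weak solutions $m$ of~\eqref{com.euler} with $|m|^2 = \rho^0\chi(t)$ a.e.\ on $U$ and $m = \bar m$ on $U^c$.

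\emph{Subsolution construction.} The central step is to build such a triple with oscillation region $U = \Omega'\times[0,T)$, equal to the trivial solution $(0,0,p(\bar\rho))$ outside $U$ and smooth inside. The natural ansatz is $\bar m \equiv 0$, which reduces the subsolution equation to $\diverg_x\bar M + \nabla_x\bar q = 0$ with $\bar M$ trace-free symmetric. Choose $\bar q$ smooth on $\R^n$, equal to $p(\rho^0(x)) + \chi(t)/n$ on $\Omega$ and to $p(\bar\rho)$ outside $\Omega'$, interpolating smoothly on $\Omega'\setminus\Omega$; then $\bar q - p(\bar\rho)$ is compactly supported. Solve a Poisson-type problem $\Delta\Psi = \bar q - p(\bar\rho)$ on $\R^n$ and set $\bar M = -\tfrac{n}{n-1}\bigl(D^2\Psi - \tfrac{\Delta\Psi}{n}I\bigr)$, which is trace-free symmetric and satisfies $\diverg_x\bar M = -\nabla_x\bar q$. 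Existence of a decaying $\Psi$ (most stringently when $n=2$, where Newtonian potentials have a logarithmic growth unless the source has zero average) requires $\int_{\R^n}(\bar q - p(\bar\rho))\,dx = 0$; choosing the interpolant on $\Omega'\setminus\Omega$ appropriately, this reduces exactly to the hypothesis $\int_\Omega p(\rho^0)\,dx = p(\bar\rho)|\Omega|$. Finally, taking $\chi$ large enough on $[0,T)$ ensures the strict inequality $\lambda_{\max}(-\bar M) < \chi(t)/n$ throughout $\Omega'$.

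\emph{Conclusion and main obstacle.} The convex integration lemma applied to $(\bar m,\bar M,\bar q)$ then yields infinitely many $m\in L^\infty\cap C([0,T);H_w(\R^n))$, each supported in $\Omega'\times[0,T)$ and realising $|m|^2 = \rho^0\chi(t)\mathbbm{1}_{\Omega'}$ a.e.; setting $m^0 := m(\cdot,0)$ produces the desired bounded initial momentum satisfying \eqref{eq2.10}, with $\supp m^0 \subset \overline{\Omega'}$. The distributional momentum equation holds globally on $\R^n$ because the subsolution was globally defined and no boundary jumps appear after extending by the trivial solution. The principal difficulty is the subsolution construction: in the periodic setting of~\cite{Chiodaroli} one may freely add $x$-independent constants to $\bar q$ and exploit compactness of the torus, but in the present setting the compact support in space of $\bar m$ and $\bar M$ forces an inhomogeneous elliptic problem on $\R^n$ whose unique compatibility condition is precisely the pressure integral hypothesis.
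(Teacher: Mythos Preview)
Your overall plan---rewrite \eqref{com.euler} as a linear system plus pointwise constraint, build a strict subsolution supported in $\Omega'$, then apply the convex integration machinery---matches the paper. The genuine gap is in the subsolution construction.

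You take $\bar m\equiv 0$ and set $\bar M=-\tfrac{n}{n-1}\bigl(D^2\Psi-\tfrac{\Delta\Psi}{n}I\bigr)$ with $\Delta\Psi=\bar q-p(\bar\rho)$. Even when the source has zero mean, the Newtonian potential $\Psi$ is \emph{not} compactly supported (nor affine outside a ball): outside $\supp(\bar q-p(\bar\rho))$ it is harmonic and decays like $|x|^{2-n}$ (or $\log|x|$ in $n=2$), so $D^2\Psi$ decays like $|x|^{-n}$ but never vanishes identically. Hence your $\bar M$ fails $\supp\bar M(\cdot,t)\subset\subset\Omega'$, which is exactly what Proposition~\ref{Prop4.1} requires. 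The zero-mean condition buys you decay, not compact support; the paper even remarks (after Lemma~\ref{LemPWS}) that $p\in C_c^\infty$ with $\int p=0$ does not force the decaying solution of $\Delta u=p$ to be compactly supported.

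The paper repairs this in two moves you are missing. First, it splits $p_1:=p(\rho^0)-p(\bar\rho)$ as $p^\eps+(p_1*\omega^\eps)$. The piece $p^\eps=p_1-p_1*\omega^\eps$ has the special Fourier structure $\widehat{p^\eps}(\xi)=\widehat{p_1}(\xi)\,(1-\widehat{\omega^\eps}(\xi))$, and since $\widehat{\omega^\eps}(0)=1$ one can factor out $|\xi|^2$ analytically; the Paley--Wiener--Schwartz theorem (Lemma~\ref{LemPWS}) then yields a \emph{compactly supported} $u$ with $\Delta u=p^\eps$, so the Hessian matrix $U^{(1)}$ built from $u$ is compactly supported. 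Second, for the smooth remainder $p_1*\omega^\eps$ (zero mean, by the hypothesis $\int_\Omega p(\rho^0)=p(\bar\rho)|\Omega|$) the paper invokes a Bogovski\u{\i}-type operator (Lemma~\ref{exdiv}, Proposition~\ref{Prop.forU2}) to get a compactly supported matrix $A$ with $\diverg A=-\nabla(p_1*\omega^\eps)$ and $\operatorname{tr}A=0$. Crucially, this $A$ need not be symmetric, so one cannot take $\bar m\equiv 0$: the skew part $V$ is absorbed by setting $\widetilde m=t\,\diverg V$ (linear in $t$, divergence-free, compactly supported), and only the symmetric part $U^{(2)}$ enters $\widetilde U=U^{(1)}+U^{(2)}$. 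Your ansatz $\bar m\equiv 0$ would force a compactly supported \emph{symmetric} trace-free $M$ with $\diverg M=-\nabla p_1$, which the paper's remark after Proposition~\ref{Prop.forU2} shows can genuinely fail.

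A minor further point: the paper does not interpolate $\bar q$ in the annulus $\Omega'\setminus\Omega$. One simply takes $q_0(x,t)=p(\rho^0(x))+\chi(t)/n$ globally; its gradient is already supported in $\Omega$.
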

\begin{theorem}\label{Th3}
Under the same assumptions of Theorem \ref{Th2}, there exists a maximal time $\overline{T}>0$ such that the weak solutions $(\rho, m)$ of \eqref{com.euler} (coming from Theorem \ref{Th2}) satisfy the admissibility condition:
\begin{equation}\label{eq2.5ch}
\begin{split}
&\int_0^T\int_{\R^n}\left[\left(\rho\eps(\rho)+\frac12\frac{|m|^2}{\rho}\right)\partial_t\varphi+\left(\eps(\rho)+\frac12\frac{|m|^2}{\rho^2}+\frac{p(\rho)}{\rho}\right)m\cdot\nabla_x\varphi \right]\dd x\dd t\\&+\int_{\R^n}\left(\rho^0\eps(\rho^0)+\frac12\frac{|m^0|^2}{\rho}\right)\varphi(\cdot, 0)\dd x\geq 0
\end{split}
\end{equation}
for every nonnegative $\varphi\in C_c^\infty([0, T);C_c^\infty(\R^n))$ with $T\leq\overline{T}$.

\end{theorem}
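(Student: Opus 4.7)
\emph{Proof plan.} The plan is to exploit the special structure of the solutions from Theorem~\ref{Th2}---namely that $\rho(x,t) = \rho^0(x)$ is time-independent, $|m(x,t)|^2 = \rho^0(x)\chi(t)\mathbbm{1}_{\Omega'}(x)$, and $\diverg_x m = 0$---to reduce the weak admissibility \eqref{eq2.5ch} to a one-variable ODE inequality for $\chi$, and then to choose $\chi$ in the construction of Theorem~\ref{Th2} so that this inequality holds on a maximal interval $[0,\overline T)$.

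First, I would substitute this structure into the distributional form of \eqref{eq2.5ch}. Since $\rho$ is time-independent, $\partial_t(\rho^0\eps(\rho^0)) = 0$, while the kinetic density is $|m|^2/(2\rho^0) = (\chi(t)/2)\mathbbm{1}_{\Omega'}$, whose time derivative is $(\chi'(t)/2)\mathbbm{1}_{\Omega'}$. Using $\diverg_x m = 0$ in the weak sense together with $\eps'(r)=p(r)/r^2$, the divergence of the flux $F(x,t)\,m(x,t)$, where $F \defeq \eps(\rho^0) + \chi(t)/(2\rho^0) + p(\rho^0)/\rho^0$, collapses to $\nabla_x F\cdot m$, and a direct computation gives
\begin{equation*}
\nabla_x F(x,t) = \left(\frac{p'(\rho^0(x))}{\rho^0(x)} - \frac{\chi(t)}{2\,\rho^0(x)^2}\right)\nabla_x\rho^0(x).
\end{equation*}
Hence admissibility reduces to the pointwise-a.e.\ condition $\chi'(t)/2 + \nabla_x F(x,t)\cdot m(x,t) \leq 0$ on $\Omega'\times[0,T)$.

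Second, I would use a worst-case bound. Since $\nabla\rho^0\equiv 0$ on $\R^n\setminus\Omega$ and $|m(x,t)|\leq\sqrt{\rho^0(x)\chi(t)}$, a straightforward estimate yields
\begin{equation*}
|\nabla_x F(x,t)\cdot m(x,t)| \leq C_1\sqrt{\chi(t)} + C_2\,\chi(t)^{3/2},
\end{equation*}
where $C_1, C_2 > 0$ depend only on $\rho^0$, $p$, and $\Omega$. A sufficient condition for admissibility is therefore the ODE inequality $\chi'(t) \leq -2C_1\sqrt{\chi(t)} - 2C_2\,\chi(t)^{3/2}$. This can be achieved, for instance, by setting $\chi(t) \defeq (\sqrt{\chi_0}-\alpha t)^2$ with $\chi_0>0$ and $\alpha$ large enough depending on $C_1,C_2,\chi_0$; the resulting $\chi$ is smooth, positive on $[0,\overline T)$ with $\overline T \defeq \sqrt{\chi_0}/\alpha > 0$, and satisfies the ODE inequality by construction. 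Feeding this particular $\chi$ into the construction of Theorem~\ref{Th2} yields solutions satisfying \eqref{eq2.5ch} on $[0,\overline T)$; the matching of initial energies needed to pass from the pointwise inequality back to the weak form \eqref{eq2.5ch} is guaranteed by \eqref{eq2.10}.

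The main obstacle is that the direction of $m$ is dictated by the convex integration machinery of Theorem~\ref{Th2} and cannot be prescribed, so the inequality must hold for \emph{any} admissible $m$ with $|m|^2 = \rho^0\chi(t)$. This forces the worst-case bound and the resulting square-root singularity in $\chi$, which limits admissibility to a finite interval. A minor technical point is to justify the identity $\diverg_x(Fm) = \nabla_x F\cdot m$ rigorously despite $m$ being only $L^\infty$; this is ensured by the $x$-smoothness of $F$ (from $\rho^0\in C^1$) combined with the weak divergence-free property of $m$ from \eqref{com.euler}.
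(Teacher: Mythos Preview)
Your approach is correct and essentially identical to the paper's: both reduce \eqref{eq2.5ch} to the pointwise inequality $\tfrac12\chi'(t)+\nabla_xF\cdot m\le0$, bound the cross term by $C_1\sqrt{\chi}+C_2\chi^{3/2}$ via $|m|\le\sqrt{\rho^0\chi}$ and the $C^1$ regularity of $\rho^0$, and then choose $\chi$ to satisfy the resulting differential inequality (the paper solves the ODE $\chi'=-C_1\sqrt{\chi}-C_2\chi^{3/2}$ directly, whereas you propose the explicit profile $(\sqrt{\chi_0}-\alpha t)^2$). The only point you leave implicit is that the $\chi$ fed back into Theorem~\ref{Th2} must also satisfy the subsolution constraint $\chi(t)>n\lambda(t)$ from Proposition~\ref{p7.1}; the paper secures this by taking $\chi(0)$ sufficiently large and letting $\overline T$ be the first time either constraint fails, and your $\chi_0$ can be chosen the same way.
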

\begin{corollary}\label{Th1}
Let $n\ge 2$ and $\Omega\subset\mathbb{R}^n$ be a nonempty bounded open set. Assume that $\rho^0\in C^1(\R^n)$ satisfies
\begin{itemize}
\item  $\rho^0(x)>0$ for any $x\in\mathbb{R}^n$,
\item $\rho^0(x)=\overline{\rho}$ for $x\in\mathbb{R}^n\backslash \Omega$.
\end{itemize}
Let $p\in C^1$ be given function such that $\int_\Omega p(\rho(x))\dd x=p(\overline{\rho})|\Omega|$. Then there exist $\Omega'\supset\Omega$, $m^0$ and a positive time $\overline{T}$ such that $\supp m^0\subset\Omega'$, $\diverg_xm^0=0$ for which there exist infinitely many $m$ such that $\supp m(\cdot, t)\subset \Omega'$ for $t\in [0, \overline{T})$ and $(\rho, m)$ is an admissible solution of \eqref{comm.euler} on $\R^n\times[0, \overline{T})$ with $\rho(x, t) = \rho_0(x)\mathbbm{1}_{[0,\overline{T})}(t)\in C^1(\R^n\times[0, \overline{T}))$.
\end{corollary}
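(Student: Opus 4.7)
The plan is to derive Corollary \ref{Th1} as a direct packaging of Theorems \ref{Th2} and \ref{Th3}, exploiting the observation that when the density is time-independent the semi-stationary system \eqref{com.euler} is equivalent, in the weak sense, to the full compressible Euler system \eqref{comm.euler}.

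First I would construct $\Omega'\supset\supset\Omega$ as a bounded open set with locally Lipschitz boundary, for instance by taking a sufficiently small open $\eps$-thickening of $\overline{\Omega}$, so that the hypotheses of Theorem \ref{Th2} are satisfied. Applying Theorem \ref{Th2} then yields a bounded initial momentum $m^0$ supported in $\Omega'$ together with infinitely many weak solutions $(\rho,m)$ of the semi-stationary system having density $\rho(x,t)=\rho^0(x)$ and momenta obeying $|m(x,t)|^2=\rho^0(x)\chi(t)\mathbbm{1}_{\Omega'}$ a.e.; this last identity forces $\supp m(\cdot,t)\subset\overline{\Omega'}$ for every $t$. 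An application of Theorem \ref{Th3} then supplies the maximal time $\overline{T}>0$ up to which these solutions satisfy the admissibility condition \eqref{eq2.5ch}, which is the distributional form of the energy inequality \eqref{eq1.2}.

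Next I would verify that these semi-stationary solutions are in fact weak solutions of the full Cauchy problem \eqref{comm.euler} with initial datum $(\rho^0,m^0)$. Since $\rho(x,t)=\rho^0(x)$ does not depend on $t$, we have $\partial_t\rho\equiv 0$ distributionally, so the continuity equation in \eqref{comm.euler} collapses to $\diverg_x m=0$, which is precisely the first equation of \eqref{com.euler} satisfied by the output of Theorem \ref{Th2}. Testing \eqref{eq2'.1} with a time-independent choice of $\psi$ (and using $\rho=\rho^0$) likewise recovers $\diverg_x m^0=0$ in the sense of distributions. The momentum equation \eqref{eq2.4ch} is common to both systems and requires no further check, and the $C^1$ regularity of $\rho$ on $\R^n\times[0,\overline{T})$ is immediate from $\rho^0\in C^1(\R^n)$ together with the time-independence of $\rho$.

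There is essentially no substantive obstacle in this argument: the entire analytic content---the convex-integration construction of the momentum and the quantitative control of the energy that yields a nontrivial $\overline{T}$---resides in Theorems \ref{Th2} and \ref{Th3}. The role of the corollary is purely to translate their joint conclusion from the semi-stationary problem \eqref{com.euler} to the full isentropic compressible Euler system \eqref{comm.euler}.
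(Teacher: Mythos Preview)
Your proposal is correct and follows essentially the same approach as the paper: apply Theorems~\ref{Th2} and~\ref{Th3} to obtain $m^0$, the infinitely many momenta, and the admissibility time $\overline{T}$, and then observe that time-independence of $\rho$ reduces the continuity equation to $\diverg_x m=0$, so the semi-stationary solutions are in fact weak admissible solutions of the full system~\eqref{comm.euler}. The paper's proof is slightly terser but the logical structure is identical.
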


The results of this article are partially motivated by possible applications to the 3D axisymmetric incompressible Euler equations (work in progress). Let us mention that, in the incompressible context, spatially compactly supported solutions can readily be constructed in $L^\infty$, but for (H\"older-)continuous solutions this question has apparently only been dealt with in~\cite{IsettOh}. In the compressible setting, continuous non-unique solutions are not yet available. To our knowledge, the present paper yields the first examples of compactly supported non-unique solutions to~\eqref{comm.euler} whose density is not piecewise constant.

Methodologically, the general convex integration scheme employed here follows the ones in~\cite{Delelissekhidieuler,Chiodaroli}. However, in the absence of periodic boundary conditions, we can no longer use the subsolutions constructed in~\cite{Chiodaroli}. Instead, we use elliptic theory to obtain compactly supported solutions for a Poisson equation whose right-hand side has a specific structure (Lemma~\ref{LemPWS}), and from this we build our subsolutions. We can then show, albeit with more effort than in~\cite{Chiodaroli}, that these subsolutions give rise to weak solutions satsfying the energy inequality~\eqref{eq1.2}.      

The paper is organized as follows. Section \ref{Prelim} contains some notions and facts that we will need later on. In Section \ref{Geo}, we recall some known results from convex integration theory and adapt some of them to our settings. The purpose of Section \ref{criterion} is to provide a general criterion on the existence of weak solutions to \eqref{comm.euler} for a given initial data. Section \ref{main} proves the non-uniqueness results by mainly employing Proposition \ref{Prop4.1} and Proposition \ref{p7.3}.
\section{Preliminaries} \label{Prelim}
In this section we establish some facts and notions which will be useful later on. For any set $\Omega\subset\R^n$ and $\eps>0$, define $\Omega^\eps:=\{y\in\R^n: \operatorname{dist}(y,\Omega)<\eps\}$.

For a domain $\Omega\subset\R^n$, we denote by $L^2_w(\Omega)$ the space of measurable, square integrable functions equipped with the weak topology. By $H(\Omega)$, we denote the space of solenoidal $L^2$-vectorfields $\Omega\to\R^n$, and $H_w(\Omega)$ is the same space but with the weak topology.

Next, recall the Paley-Wiener-Schwartz Theorem (for more details, see \cite{ReedSimon,Hoermander90}):
\begin{theorem}[Paley-Wiener-Schwartz]\label{PWS}
If $u$ is a distribution of order $N$ with support contained in a closed ball $\overline{B_r(0)}\subset\R^n$, then its Fourier transform $\widehat{u}$ 
can be extended to an entire function in $\C^n$ satisfying
\begin{equation*}
|\widehat{u}(\xi)|\leq C(1+|\xi|)^Ne^{r\left|\mbox{Im }\xi\right|},\quad \xi\in\C^n.
\end{equation*}

Conversely, an entire function in $\C^n$ meeting such an estimate is the Fourier transform of a distribution of order $N$ supported inside $\overline{B_r(0)}\subset\R^n$.
\end{theorem}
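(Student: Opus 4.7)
I would prove the two directions separately, following the classical route: the forward implication is a differentiation-under-the-pairing argument, while the converse rests on a contour shift in several complex variables.

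For the forward direction, I would invoke the standard structure theorem that a compactly supported distribution of order $N$ extends uniquely to a continuous linear functional on $C^N(\R^n)$, with the bound
\[|\langle u, \varphi\rangle| \leq C\sum_{|\alpha|\leq N}\sup_{|x|\leq r}|\partial^\alpha\varphi(x)|.\]
Defining $\widehat u(\zeta)\defeq\langle u_x, e^{-ix\cdot\zeta}\rangle$ for $\zeta=\xi+i\eta\in\C^n$, entireness is verified by observing that the Newton quotients in each $\zeta_j$-direction converge in every $C^N$-seminorm over $\overline{B_r(0)}$ to $-ix_j e^{-ix\cdot\zeta}$, which legitimizes differentiation under the duality pairing. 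The stated estimate then drops out of inserting $\varphi(x)=e^{-ix\cdot\zeta}$ into the extension bound and using $|\partial_x^\alpha e^{-ix\cdot\zeta}|=|\zeta^\alpha|e^{x\cdot\eta}\leq|\zeta|^{|\alpha|}e^{r|\eta|}$ on $\overline{B_r(0)}$.

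For the converse, given an entire $F$ with the hypothesized growth, the restriction $F|_{\R^n}$ is polynomially bounded and thus tempered; set $u\defeq\mathcal{F}^{-1}F\in\mathcal{S}'(\R^n)$. To obtain $\supp u\subset\overline{B_r(0)}$, I would convolve $u$ with a mollifier $\chi_\eps\in C_c^\infty(B_\eps(0))$. The smooth function $u_\eps=u*\chi_\eps$ has Fourier transform $F\cdot\widehat{\chi_\eps}$; since $\widehat{\chi_\eps}$ decays rapidly in every horizontal strip (by the forward direction applied to $\chi_\eps$), the inversion formula
\[u_\eps(x)=(2\pi)^{-n}\int_{\R^n}F(\xi)\widehat{\chi_\eps}(\xi)e^{ix\cdot\xi}\dd\xi\]
is absolutely convergent. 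Fixing $|x|>r+\eps$, the integrand extends holomorphically in $\zeta=\xi+i\eta$, and by Cauchy's theorem the contour can be shifted to $\R^n+itx/|x|$ for any $t>0$; the combined exponential factor $e^{-t|x|+r|\eta|+\eps|\eta|}=e^{-t(|x|-r-\eps)}$ drives $u_\eps(x)\to 0$ as $t\to\infty$. Sending $\eps\to 0$ then forces $\supp u\subset\overline{B_r(0)}$. The order bound follows by expressing $\langle u,\varphi\rangle=\int F\cdot\mathcal{F}^{-1}\varphi\,\dd\xi$ and absorbing the $(1+|\xi|)^N$ growth of $F$ against a factor $(1+|\xi|)^{-N-n-1}$ at the cost of a bounded number of extra derivatives on $\varphi$, produced by integration by parts.

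\textbf{Main obstacle.} The technical heart of the argument is justifying the Cauchy contour shift in $\C^n$: one must control the side contributions of large cubes at infinity, and this is precisely where the rapid horizontal-strip decay of $\widehat{\chi_\eps}$ is indispensable, so the mollification step cannot be avoided. Once the shift is in place, the passage $\eps\to 0$ (by distributional convergence) and the derivative count for the order bound are comparatively routine.
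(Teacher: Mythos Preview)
The paper does not prove this theorem at all; it is stated as a classical result with a reference to \cite{ReedSimon,Hoermander90} and then used as a tool in the proof of Lemma~\ref{LemPWS}. Your proposal supplies the standard proof one finds in those references (essentially H\"ormander's argument in Theorem~7.3.1 of \cite{Hoermander90}), and the strategy---extension via the $C^N$-seminorm bound for the forward direction, mollification plus contour shift for the converse---is correct.

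One minor remark on your converse argument: the integration-by-parts step you sketch at the end yields control by derivatives of $\varphi$ up to order roughly $N+n+1$, not $N$. Recovering the sharp order bound $N$ in the converse requires a slightly more delicate argument (or, depending on the reference, is sometimes not even asserted---H\"ormander's formulation, for instance, does not claim the precise order is recovered). Since the paper only uses the support conclusion of the converse (in Lemma~\ref{LemPWS}, to deduce that $\widecheck{g_1}$ is supported in $\overline{B_\eps(0)}$), this discrepancy is immaterial for the applications here.
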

Let $\omega:\R^n\to\R$ be a smooth spherically symmetric function satisfying the following conditions:
$w(x)$ is constant for $|x|\leq\frac12$, $\supp(\omega)\subset B_1(0)$, $\omega(x)\geq 0$ and 
\begin{equation*}
\int_{\R^n}\omega(x)\dd x=1.
\end{equation*}

Denote $\displaystyle\omega^\eps(x)=\frac{1}{\eps^n}\omega\left(\frac{x}{\eps}\right)$ for $\eps>0$.
Since $\rho^0(x)=\overline{\rho}$ for $x\notin\Omega$, we have $p(\rho^0)=p(\overline{\rho})=\overline{p}$. Let $p_1(x)=p(\rho^0(x))-\overline{p}$. Then $\supp p_1\subset\Omega$ and

\begin{equation*}
\int_{\R^n}p_1(x)\dd x=\int_{\Omega}p_1(x)\dd x=0.
\end{equation*}

The main point about the following lemma is that, for right hand sides of a specific form, Poisson's equation admits solutions with compact support.
\begin{lemma}\label{LemPWS} Let  $p^\eps$ be defined by $p^\eps(x):=p_1(x)-p_1*\omega^\varepsilon(x)$, so that $\supp(p^\eps)\subset{\Omega^\eps}$. Then, there exists $u\in C^{2,\alpha}_c(\R^n)$ for every $0<\alpha<1$ such that
\begin{equation*}
\Delta u=p^\eps\quad\text{and}\quad\supp u\subset\overline{\Omega^\eps}.
\end{equation*}
\end{lemma}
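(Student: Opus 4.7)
The plan is to write $u$ explicitly as the difference between the Newtonian potential of $p_1$ and its mollification. Let $v$ denote the Newtonian potential of $p_1$, normalized so that $\Delta v = p_1$ in $\R^n$; since $p_1$ is continuous and compactly supported in $\overline{\Omega}$, classical Schauder regularity for the Newtonian potential yields $v \in C^{2,\alpha}_{\mathrm{loc}}(\R^n)$ for every $\alpha \in (0,1)$. I then define
\[
u \defeq v - v*\omega^\eps.
\]
Commuting the Laplacian with the convolution gives $\Delta u = p_1 - p_1*\omega^\eps = p^\eps$, and the regularity $u \in C^{2,\alpha}(\R^n)$ for every $\alpha \in (0,1)$ follows from that of $v$ combined with the smoothness of $\omega^\eps$.

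The heart of the argument is to verify that $\supp u \subset \overline{\Omega^\eps}$. Fix $x \in \R^n$ with $\dist(x,\Omega) > \eps$; then $\overline{B_\eps(x)} \cap \overline{\Omega} = \emptyset$, so $p_1 \equiv 0$ on a neighborhood of $\overline{B_\eps(x)}$ and hence $v$ is harmonic there. Writing $\omega^\eps(z) = \tilde{\omega}^\eps(|z|)$ and passing to polar coordinates, I compute
\[
(v*\omega^\eps)(x) = \int_0^\eps r^{n-1}\tilde{\omega}^\eps(r) \int_{S^{n-1}} v(x - r\sigma)\,d\sigma\,dr = v(x)\int_{\R^n}\omega^\eps(y)\,dy = v(x),
\]
where the inner spherical integral equals $|S^{n-1}|v(x)$ by the mean value property applied to the harmonic function $v$ on each sphere $\partial B_r(x)$ for $r \leq \eps$. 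Consequently $u(x) = 0$ for every $x$ outside $\overline{\Omega^\eps}$, which is exactly the claimed support condition.

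I do not anticipate any serious obstacle in this route: the Schauder regularity of the Newtonian potential is a classical estimate, and the key cancellation $v*\omega^\eps = v$ outside $\overline{\Omega^\eps}$ is a direct consequence of the mean value property for harmonic functions coupled with the spherical symmetry of $\omega^\eps$. Notably, this argument does not invoke Theorem~\ref{PWS} or the hypothesis $\int p_1 = 0$; both would enter a complementary Fourier-analytic proof based on observing that $\widehat{u}(\xi) = -\widehat{p^\eps}(\xi)/|\xi|^2$ extends to an entire function thanks to the radial structure of $\widehat{\omega^\eps}$, and then estimating its growth to deduce compact support via Paley--Wiener--Schwartz.
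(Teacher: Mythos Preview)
Your argument is correct and takes a genuinely different, more elementary route than the paper. The paper works entirely on the Fourier side: it writes $\widehat{p^\eps}=\widehat{p_1}(1-\widehat{\omega^\eps})$, uses radial symmetry to factor $1-\widehat{\omega^\eps}(\xi)=-|\xi|^2 g_1(|\xi|^2)$ with $g_1$ entire, sets $\widehat{u}=\widehat{p_1}\,g_1(|\xi|^2)$, and invokes Paley--Wiener--Schwartz (Theorem~\ref{PWS}) to conclude that $u=p_1*\widecheck{g_1}$ is supported in $\overline{\Omega^\eps}$. Your real-variable construction $u=v-v*\omega^\eps$, with $v$ the Newtonian potential of $p_1$, is in fact the \emph{same} $u$ (since $E-E*\omega^\eps$ has Fourier transform $-(1-\widehat{\omega^\eps})/|\xi|^2=g_1(|\xi|^2)$), but your verification of the support via the mean value property of harmonic functions avoids Fourier analysis altogether and makes transparent why radial symmetry of $\omega^\eps$ is the key structural ingredient. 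The Fourier approach, on the other hand, generalises more readily to other constant-coefficient operators and to distributional data.

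One small correction: your appeal to Schauder regularity for $v$ is misphrased. Mere continuity of $p_1$ does \emph{not} give $v\in C^{2,\alpha}$; one needs $p_1\in C^{0,\alpha}$. In the paper's setting this is available, since $\rho^0\in C^1$ and $p\in C^1$ force $p_1\in C^1\subset C^{0,\alpha}$ for every $\alpha\in(0,1)$, so the conclusion stands once you cite the correct hypothesis. Finally, your closing remark is slightly off: the paper's Fourier proof does use Theorem~\ref{PWS} but, like yours, does not actually require $\int p_1=0$ for this lemma.
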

\begin{proof}
By the Paley-Wiener-Schwartz Theorem (Theorem~\ref{PWS}), the Fourier transform $\widehat{p^\eps}$ is an analytic function, and we have
\begin{equation*}
\widehat{p^\eps}=\widehat{p_1}-\widehat{p_1}\widehat{\omega^\eps}=\widehat{p_1}(1-\widehat{\omega^\eps}).
\end{equation*}
Note that $\widehat{\omega^\eps}(0)=1$. Since $\omega^\eps$ is a spherically symmetric function, so is $\widehat{\omega^\eps}(\xi)$. So there exists a single variable analytic function $g$ on $\C$   such that
\begin{equation*}
\widehat{\omega^\eps}(\xi)=g(|\xi|^2),\quad\text{where}\quad  |\xi|^2=\xi_1^2+\dots+\xi_n^2.
\end{equation*}
Again due to the Paley-Wiener-Schwartz Theorem, as $\supp\omega^\eps\subset B_\eps(0)$, we have the estimate
\begin{equation*}
|\widehat{\omega^\eps}(\xi)|\leq C\exp(\eps|\mbox{Im }\xi|)\quad\text{for}\quad\xi\in \C^n.
\end{equation*}
Moreover, since $g(0)=1$, there exists an analytic function $g_1$ such that $1-g(z)=-zg_1(z)$. Therefore,
\begin{equation*}
1-g(|\xi|^2)=-|\xi|^2g_1(|\xi|^2),
\end{equation*}
and so we have
\begin{equation*}
\left||\xi|^2g_1(|\xi|^2)\right|\leq C\exp(\eps|\mbox{Im }\xi|).
\end{equation*}
We claim that
\begin{equation*}
\left|g_1(|\xi|^2)\right|\leq C_2\exp(\eps|\mbox{Im }\xi|).
\end{equation*}
Indeed, $g_1(z)$ is bounded for $|z|\leq 1$, say $|g_1(z)|\leq A$ for $|z|\leq 1$.
If $||\xi|^2|\geq 1$, then obviously we have
\begin{equation*}
    \left|g_1(|\xi|^2)\right|\leq C\exp(\eps|\mbox{Im }\xi|).
\end{equation*}
Thus, combining the two obtained bounds we get
\begin{equation*}
\left|g_1(|\xi|^2)\right|\leq \max\{A, C\}\exp(\eps|\mbox{Im }\xi|)\quad\text{for all}\quad\xi\in \C^n.
\end{equation*}
Invoking once again the Paley-Wiener-Schwartz Theorem, the inverse Fourier transform $\widecheck{g_1}$ is concentrated in $\overline{B_\eps(0)}$, i.e., $\supp\widecheck{g_1}\subset\overline{B_\eps(0)} $. Hence, we obtain
\begin{equation*}
\widehat{p^\eps}=-|\xi|^2\widehat{p_1} g_1(|\xi|^2).
\end{equation*}
Let $u$ be a distribution defined by its Fourier transform
\begin{equation*}
\widehat{u}(\xi)=\widehat{p_1} g_1(|\xi|^2).
\end{equation*}
Then $u=p_1*\widecheck{g_1}$, and from $\supp \widecheck{g_1}\subset \overline{B_\eps(0)}$ and $\supp p_1\subset\Omega$ it follows that $\supp u\subset\overline{\Omega^\eps}$.
Moreover, we have
\begin{equation*}
\widehat{\Delta u}=-|\xi|^2\widehat{u}(\xi)=-|\xi|^2\widehat{p_1} g_1(|\xi|^2)=\widehat{p^\eps}
\end{equation*}
and thus $\Delta u=p^\eps$ as desired. Finally, the regularity $u\in C^{2,\alpha}$ follows from standard Schauder theory, as $p^\eps$ is continuously differentiable.

The lemma is proved.
\end{proof}
\begin{remark}
Note that, generally, $p\in C_c^\infty(\R^n)$ does not imply that the unique decaying solution $u$ of $\Delta u=p$ is compactly supported. For instance, if $p\geq 0$ is not identically zero, the the corresponding solution $u$ does not have compact support. Indeed, any compactly supported subharmonic function vanishes identically by the maximum principle.
\end{remark}

\begin{lemma}\label{exdiv} Let $n\geq 2$. Let $\Omega'$
be a bounded locally Lipschitz domain containing $\overline{\Omega^\eps}\subset\R^n$.
If $p\in C_c^\infty(\Omega^\eps)$ satisfies the compatibility condition
 $$\int_{\Omega'}p(x)\dd  x=0,$$
then there exists $(\phi_j)_{j=1}^n\subset C_c^\infty(\Omega')$ such that
\begin{equation*}
\sum_{j=1}^{n}\partial_j\phi_j=p.
\end{equation*}
\end{lemma}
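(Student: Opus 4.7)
To construct compactly supported $\phi_j$ inverting $\diverg$, my plan is a localize-and-patch argument. The only genuine obstruction to a compactly supported primitive is the mean condition $\int p = 0$, which is guaranteed here since $\supp p \subset \Omega^\eps \subset \Omega'$ makes $\int_{\Omega'} p = \int_{\R^n} p = 0$. I would cover the compact set $\overline{\Omega^\eps}$ by finitely many small open cubes $Q_1, \ldots, Q_N$ with $\overline{Q_i} \subset \Omega'$, take a smooth partition of unity $\{\chi_i\}$ subordinate to the cover, and set $p_i = \chi_i p$ and $c_i = \int p_i$, so that $p = \sum_i p_i$, $\supp p_i \subset Q_i$, and $\sum_i c_i = 0$. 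Fix $\eta_i \in C_c^\infty(Q_i)$ with $\int \eta_i = 1$; then the decomposition $p_i = (p_i - c_i \eta_i) + c_i \eta_i$ separates each $p_i$ into a zero-mean local piece and a mean-carrying reference bump.

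The core is the following local claim, to be proved by induction on $n$: for any cube $Q = \prod_{j=1}^n (a_j, b_j)$ and $q \in C_c^\infty(Q)$ with $\int q = 0$, there exist $\varphi_j \in C_c^\infty(Q)$ with $\sum_j \partial_j \varphi_j = q$. The case $n=1$ is the antiderivative $\varphi(x) = \int_{a_1}^x q(t)\,dt$, compactly supported in $(a_1, b_1)$ precisely because $\int q = 0$. For the inductive step, let $h(x') = \int_{\R} q(x', t)\,dt$; this is a zero-mean function in $C_c^\infty$ on the $(n-1)$-dimensional cube $Q' = \prod_{j<n}(a_j, b_j)$, so by the inductive hypothesis there exist $\psi_j \in C_c^\infty(Q')$ ($j < n$) with $\sum_{j<n} \partial_j \psi_j = h$. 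Choose a smooth $\chi: \R \to [0, 1]$ equal to $0$ for $x_n$ below, and $1$ for $x_n$ above, the $x_n$-projection of $\supp q$, so that $\supp \chi' \subset (a_n, b_n)$. Define $\varphi_n(x) = \int_{-\infty}^{x_n} q(x', t)\,dt - \chi(x_n) h(x')$ and $\varphi_j(x) = \chi'(x_n) \psi_j(x')$ for $j < n$. A direct computation yields $\sum_j \partial_j \varphi_j = q$, and a case analysis on whether $x_n$ lies below, above, or in the transition region of $\chi$ gives $\supp \varphi_j \subset Q$ for every $j$.

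Applying the claim in each $Q_i$ to $p_i - c_i \eta_i$ produces $\tilde\phi^{(i)} \in C_c^\infty(Q_i)^n$ with $\diverg \tilde\phi^{(i)} = p_i - c_i \eta_i$. It then remains to find $\psi \in C_c^\infty(\Omega')^n$ with $\diverg \psi = \sum_i c_i \eta_i$. Using connectedness of $\Omega'$ (if $\Omega'$ is disconnected, one treats each component separately, noting that $\supp p$ lies in a single component when $\Omega^\eps$ is connected), I chain the cubes $Q_1, \ldots, Q_N$ through a sequence of auxiliary cubes $\widetilde Q_k \subset \Omega'$, each containing $\overline{Q_k \cup Q_{k+1}}$ (inserting further intermediate cubes if needed to stay inside $\Omega'$), and iteratively apply the local claim inside $\widetilde Q_k$ to the telescoping zero-mean source $\bigl(\sum_{j \leq k} c_j\bigr)(\eta_{k+1} - \eta_k)$. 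Summing gives $\phi = \sum_i \tilde\phi^{(i)} + \psi \in C_c^\infty(\Omega')^n$ with $\diverg \phi = p$. The main technical point is this chaining step: the Lipschitz regularity and connectedness of $\Omega'$ are what allow the auxiliary cubes to be realized strictly inside $\Omega'$, after which everything else is a mechanical bookkeeping application of the local claim.
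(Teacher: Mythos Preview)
Your argument is correct and self-contained, but it is genuinely different from what the paper does: the paper gives no proof at all and simply invokes Theorem~III.3.3 in Galdi's monograph, i.e.\ the Bogovski\u{\i} operator, which solves $\diverg\phi=p$ on a bounded Lipschitz domain via an explicit integral formula on star-shaped pieces and then patches. Your route is the elementary de~Rham--style construction: an induction-on-dimension primitive on cubes, a partition of unity, and a connectedness/chaining argument to kill the local means. This is more hands-on and in fact uses \emph{less} than the hypotheses---your construction needs only that $\Omega'$ is open and connected (which is what ``domain'' means here), not the Lipschitz regularity you mention at the end; the Lipschitz assumption is what Galdi's theorem requires to get Sobolev bounds on the Bogovski\u{\i} operator, but no such estimates are needed for the bare $C_c^\infty$ existence statement. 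Two small remarks: your aside about disconnected $\Omega'$ is unnecessary since a domain is by convention connected; and the chaining step deserves one more sentence making explicit that between any two $Q_k$, $Q_{k+1}$ you pick a polygonal path in $\Omega'$, cover it by finitely many cubes in $\Omega'$, plant an auxiliary bump $\eta$ in each, and telescope $\eta_{k+1}-\eta_k$ along that chain---as written, ``inserting further intermediate cubes if needed'' is correct in spirit but a reader might want to see the telescoping spelled out once.
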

This lemma is an immediate consequence of~\cite[Theorem III.3.3]{Galdi}.
\begin{remark}
Note that the regularity assumption on $\Omega'$ 
can be further weakened, for instance, to a bounded domain of $\R^n$, such that $\Omega'=\cup_{k=1}^N\Omega'_{k}$, $N\geq 1$,
where each $\Omega'_{k}$ is star-shaped with respect to some open ball $B_k$ with $\overline{B_k}\subset\Omega'_k$ and  
 $$\int_{\Omega'_{k}}p(x)\dd  x=0,\quad \mbox{for any}\quad k=1,\dots, N$$
(see Theorem III3.1 \cite{Galdi}).
\end{remark}

Let $\mathcal{S}_n$ be the space of symmetric $n\times n$ matrices and let $\mathcal{S}_0^n$ be the subspace of $\mathcal{S}_n$ with null trace. Further we denote by $I_n$ the $n\times n$ identity matrix.
\begin{proposition}\label{Prop.forU2}
Let $p\in C_c^\infty(\Omega^\eps)$ such that $\int_{\Omega'}p(x)\dd  x=0$, with $\Omega'$ being given as in Lemma \ref{exdiv}.
Then there exists a pair $(m, U)$ of a vector field and a matrix field with values in $\mathcal{S}_0^n$ satisfying the following conditions:
\begin{enumerate}[(i)]
\item $(m(t), U)\in C_c^\infty(\Omega')$ for each $t\in\R$,
\item $\diverg m=0$,
\item $\partial_t m+\diverg U+\nabla p=0$,
\item $m$ is linear in $t$, and $U$ does not depend on $t$.
\end{enumerate}
\end{proposition}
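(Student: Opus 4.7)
The plan is to seek $m$ in the affine form $m(x,t) = t\, b(x)$ (so (iv) holds trivially), which reduces (ii) to $\diverg b = 0$ and (iii) to the single algebraic relation
\[
b + \diverg U + \nabla p = 0
\]
between a divergence-free field $b \in C_c^\infty(\Omega')$ and a symmetric trace-free field $U \in C_c^\infty(\Omega'; \mathcal{S}_0^n)$. Thus the entire problem boils down to producing such a pair $(b, U)$ with compact support in $\Omega'$.

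The key input is Lemma~\ref{exdiv}: since $p \in C_c^\infty(\Omega^\eps)$ has vanishing integral over $\Omega'$, there exists $\phi = (\phi_j)_{j=1}^n \in C_c^\infty(\Omega')$ with $\diverg \phi = p$. I would then set
\[
U_{ij} \defeq -\frac{n}{2(n-1)}\bigl(\partial_i \phi_j + \partial_j \phi_i\bigr) + \frac{p}{n-1}\,\delta_{ij}, \qquad b \defeq \frac{n}{2(n-1)}\bigl(\Delta \phi - \nabla p\bigr).
\]
By construction both $U$ and $b$ are smooth and supported in $\supp \phi \cup \supp p \subset \Omega'$, and $U$ is symmetric. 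Its trace is $-\tfrac{n}{n-1}\diverg \phi + \tfrac{n}{n-1}p = 0$, so $U$ is $\mathcal{S}_0^n$-valued. A short computation yields
\[
(\diverg U)_i = -\frac{n-2}{2(n-1)}\partial_i p - \frac{n}{2(n-1)}\Delta \phi_i,
\]
from which $b + \diverg U + \nabla p \equiv 0$ follows directly. Finally $\diverg b = \frac{n}{2(n-1)}(\Delta \diverg \phi - \Delta p) = 0$ using $\diverg \phi = p$. Setting $m(x,t) \defeq t\, b(x)$ then delivers (i)--(iv).

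The delicate point is the calibration of the two scalar constants in front of the symmetrized gradient and of $p\,\delta_{ij}$. The trace-correcting term $\tfrac{1}{n-1}p\,\delta_{ij}$ is forced upon us in order to kill the trace $-\diverg\phi = -p$ introduced by the symmetrized gradient, but it perturbs $\diverg U$ by an extra $\tfrac{1}{n-1}\nabla p$. The pre-factor $\tfrac{n}{2(n-1)}$ (rather than the naive $\tfrac12$) on the symmetrized gradient is then pinned down so that, after forming $b = -\diverg U - \nabla p$, the $\nabla p$ contributions collapse and the residual Laplacian-of-$\phi$ term has vanishing divergence thanks to the identity $\Delta \diverg \phi = \Delta p$. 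Any other scaling would leave $\diverg b$ proportional to a nonzero multiple of $\Delta p$, forcing a further Bogovski-type correction that loops back to an analogous problem and fails to terminate; it is precisely this cancellation that makes the construction work in one stroke using only Lemma~\ref{exdiv}.
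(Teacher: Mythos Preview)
Your proof is correct and essentially the same as the paper's. The paper first writes the (non-symmetric, trace-free) matrix $A_{ij}=\frac{n}{1-n}\bigl(\partial_i\phi_j-\frac{p}{n}\delta_{ij}\bigr)$ with $\diverg A=-\nabla p$, then splits $A=U+V$ into its symmetric and skew-symmetric parts and sets $m=t\,\diverg V$; your $U$ coincides with the paper's symmetric part and your $b$ with $\diverg V$, so the only difference is that you verify $\diverg b=0$ by the identity $\Delta\diverg\phi=\Delta p$ whereas the paper invokes the general fact $\diverg\diverg V=0$ for skew-symmetric $V$.
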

\begin{proof}
Define the matrix field $A$ by
\begin{equation*}
A:=\left[\frac{n}{1-n}\left(\partial_i\phi_j-\frac{p}{n}\delta_{ij}\right)\right]_{ij},\quad i,j=1,\dots,n,
\end{equation*}
where $(\phi_j)_{j=1}^n$ are chosen as in Lemma \ref{exdiv} and $\delta_{ij}$ is the Kronecker delta. We have
$$
\operatorname{tr}(A)=\frac{n}{1-n}\left(\sum_{i=1}^{n}\partial_i\phi_i-p\right)=\frac{n}{1-n}(p-p)=0.
$$
We write $A=U+V$, where $U=\frac12(A^t+A),$ $V=\frac{1}{2}(A-A^t)$, and $A^t$ is the transpose of $A$. Then $U\in\mathcal{S}_0^n$, and $V$ is a skew-symmetric matrix, i.e., $V^t=-V$.
Note that if $V$ is a skew-symmetric matrix field, then $\diverg\diverg V=0$. Indeed,
$$
\diverg\diverg V=\sum_{i,j=1}\partial_i\partial_jV_{ij}=-\sum_{i,j=1}^n\partial_j\partial_i V_{ij}.
$$
Further, we have
\begin{equation*}
\begin{split}
(\diverg A)_i&=\frac{n}{1-n}\left(\sum_{j=1}^n\partial_i\partial_j\phi_j-\frac{\partial_ip}{n}\right)\\&
=\frac{n}{1-n}\partial_i\left(\sum_{j=1}^n\partial_j\phi_j-\frac{p}{n}\right)=\left(\frac{n}{1-n}\frac{n-1}{n}\right)\frac{\partial p}{\partial x_i}\\&
=-\partial_ip.
\end{split}
\end{equation*}
Thus, we get $\diverg A=-\nabla p$. Take $m=t\diverg V$. Then,
\begin{equation*}
\diverg m=t\diverg\diverg V=0
\end{equation*}
as well as
\begin{equation*}
\partial_tm+\diverg U=\diverg V+\diverg U=\diverg A=-\nabla p.
\end{equation*}
The proposition is proved.
\end{proof}
\begin{remark}
Let $p\in C_c^\infty(\R^2)$ and $\int_{\R^2}p(x_1, x_2)\dd x_1\dd x_2\neq0.$ Then there are no functions $U_1, U_2\in C_c^\infty(\R^2)$ satisfying
\begin{equation*}
\begin{cases}
\partial_1U_{1}+\partial_2U_{2}=\partial_1p,\\
\partial_1U_{2}-\partial_2U_{1}=\partial_2p.
\end{cases}
\end{equation*}
Indeed, if this were the case, then $\Delta U_2=2\partial_1\partial_2 p$, or equivalently  $-|\xi|^2\widehat{U_2}=-2\xi_1\xi_2\widehat{p}$, where $\widehat{p}(0)\neq 0$. Then $\widehat{U_2}$ and $\widehat{p}$ would be analytic functions in virtue of the compact support of $U_2$ and $p$ (see Theorem~\ref{PWS}). However, $|\xi|^2\widehat{U_2}=2\xi_1\xi_2\widehat{p}$ implies that  $|\xi|^2$ divides $\widehat{p}$, in contradiction to $\widehat{p}(0)\neq 0$.

This consideration shows that our compatibility assumption is necessary in the proof of Proposition~\ref{Prop.forU2}, as otherwise it would be impossible to construct a symmetric traceless matrix field $A$ with compact support such that $\diverg A=-\nabla p$. 
\end{remark}
\section{Geometric setup}\label{Geo}
We formulate the Euler equations as a differential inclusion and recall some well-known tools, thereby closely following~\cite{Chiodaroli,Delelissekhidieuler}. 

\begin{lemma}\label{lem3.1}
Let $m\in L^\infty(\R^n\times(0,T);\R^n)$, $U\in L^\infty(\R^n\times(0,T);\mathcal{S}^n_0)$ and \\{$q\in L^\infty(\R^n\times(0,T);\R)$} such that
\begin{equation}\label{eqs3.1}
\begin{aligned}
\diverg_xm=&0,\\
\partial_t m+\diverg_xU+\nabla_xq=&0.
\end{aligned}
\end{equation}
If $(m, U, q)$ solve \eqref{eqs3.1} and in addition there exists $\rho\in L^\infty(\R^n;\R^+)$ such
that 
\begin{equation}\label{eqs3.2}
\begin{aligned}
U&=\frac{m\otimes m}{\rho}-\frac{|m|^2}{n\rho}I_n\quad\text{a.e. in}\quad \R^n\times[0, T],\\
q&=p(\rho)+\frac{|m|^2}{n\rho}\quad\text{a.e. in}\quad \R^n\times[0, T],
\end{aligned}
\end{equation}
then $m$ and $\rho$ solve \eqref{com.euler} distributionally. Conversely, if $m$ and $\rho$ are
weak solutions of \eqref{com.euler}, then $m$, $U=\frac{m\otimes m}{\rho}-\frac{|m|^2}{n\rho}I_n$ and $q=p(\rho)+\frac{|m|^2}{n\rho}$ satisfy \eqref{eqs3.1} and \eqref{eqs3.2}.
\end{lemma}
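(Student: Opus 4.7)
The proof is essentially an algebraic bookkeeping exercise, of the same kind that appears in the De~Lellis--Sz\'ekelyhidi reformulation of incompressible Euler as a differential inclusion. The plan is to verify a single algebraic identity and then to translate distributional equations back and forth, so I do not expect any genuine obstacle; the only subtle point is making sure the pointwise formulas are meaningful almost everywhere.

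First I would note the key identity. With $U$ and $q$ defined as in \eqref{eqs3.2}, one has
\begin{equation*}
U + q\,I_n \;=\; \frac{m\otimes m}{\rho} - \frac{|m|^2}{n\rho}I_n + p(\rho)\,I_n + \frac{|m|^2}{n\rho}I_n \;=\; \frac{m\otimes m}{\rho} + p(\rho)\,I_n .
\end{equation*}
Since $\diverg_x(q\,I_n)=\nabla_x q$, the momentum equation in \eqref{eqs3.1} becomes
\begin{equation*}
0 \;=\; \partial_t m + \diverg_x U + \nabla_x q \;=\; \partial_t m + \diverg_x\!\left(\frac{m\otimes m}{\rho}\right) + \nabla_x\,p(\rho),
\end{equation*}
which is precisely the second equation of \eqref{com.euler}. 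The first equation, $\diverg_x m=0$, is literally the first line of \eqref{eqs3.1}. Both identities are distributional, so testing against $\phi\in C_c^\infty([0,T);C_c^\infty(\R^n))$ gives \eqref{eq2'.1} and \eqref{eq2.4ch} at once. This establishes the forward direction.

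For the converse, assume $(\rho,m)$ is a weak solution of \eqref{com.euler} and define $U,q$ by \eqref{eqs3.2}. Then $U$ is manifestly symmetric, and since $\operatorname{tr}\!\left(\frac{m\otimes m}{\rho}\right)=\frac{|m|^2}{\rho}$, subtracting $\frac{|m|^2}{n\rho}I_n$ kills the trace, so $U\in\mathcal{S}^n_0$ almost everywhere. The same identity displayed above, read from right to left, rewrites the momentum equation of \eqref{com.euler} as the second line of \eqref{eqs3.1}, and the continuity equation of \eqref{com.euler} reduces to $\diverg_x m=0$ because $\rho$ is time-independent (or, if one is working with the full system \eqref{comm.euler}, the stationary case considered here). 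The $L^\infty$ bounds on $m$, $U$, $q$ follow from the assumed $L^\infty$ bounds on $m$ and $\rho$ together with the standing positivity $\rho\ge c>0$, which is what makes all pointwise expressions well-defined; this is really the only point requiring a brief comment, and it is where the assumption $\rho\in L^\infty(\R^n;\R^+)$ (bounded below away from zero) is used.
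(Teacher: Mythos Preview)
Your argument is correct and is exactly the standard algebraic verification underlying this lemma; the paper itself does not spell out a proof but simply refers to Lemma~3.1 in \cite{Chiodaroli}, whose content is precisely the identity $U+qI_n=\frac{m\otimes m}{\rho}+p(\rho)I_n$ that you display. One small cleanup: in the converse you need not invoke time-independence of $\rho$, since in \eqref{com.euler} the first equation is already $\diverg_x m=0$ (not the full continuity equation of \eqref{comm.euler}), and the reference to \eqref{eq2'.1}--\eqref{eq2.4ch} is slightly beside the point, as the lemma concerns only the distributional identities, not the attainment of initial data.
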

The above Lemma is based on Lemma 3.1 in \cite{Chiodaroli}.

Next, for any given $\rho\in(0, \infty)$, we define the graph
\begin{equation*}
K_\rho:=\left\{(m, U, q)\in\R^n\times\mathcal{S}_0^n\times\R^+: U=\frac{m\otimes m}{\rho}-\frac{|m|^2}{n\rho}I_n,\hspace{0.2cm} q=p(\rho)+\frac{|m|^2}{n\rho} \right\}.
\end{equation*}
In the present setting, it is convenient to consider ``slices'' of the graph $K_\rho$ as in \cite{Chiodaroli}. For any given $\chi\in\R^+$, we thus define
\begin{equation*}
\begin{split}
K_{\rho,\chi}:=\bigg\{(m, U, q)&\in\R^n\times\mathcal{S}_0^n\times\R^+: U=\frac{m\otimes m}{\rho}-\frac{|m|^2}{n\rho}I_n,\\& q=p(\rho)+\frac{|m|^2}{n\rho},\hspace{0.2cm} |m|^2=\rho\chi \bigg\}.
\end{split}
\end{equation*}
Consider the
$(n+1)\times(n+1)$ symmetric matrix in block form
\begin{equation}\label{eq3.7}
M=\begin{pmatrix}
U+qI_n& m\\
m& 0
\end{pmatrix}.
\end{equation}
Note that, with the new coordinates $y=(x, t)\in \R^n$, the system \eqref{eqs3.1}
can be easily rewritten as $\diverg_y M = 0$. Thus, the wave cone associated with the system \eqref{eqs3.1}, i.e., the set of all states $(m,U,q)$ such that there exists $\xi\in\R^{n+1}\setminus\{0\}$ such that $(m,U,q)h(y\cdot\xi)$ satisfies~\eqref{eqs3.1} for every profile $h:\R\to\R$, is equal to
\begin{equation}\label{eq3.8}
\Lambda=\left\{(m, U, q)\in\R^n\times\mathcal{S}_0^n\times\R^+:\det\begin{pmatrix}
U+qI_n& m\\
m& 0
\end{pmatrix}=0 \right\}.
\end{equation}
For any $S\in \mathcal{S}^n$ let $\lambda_{\max}(S)$ denote the largest eigenvalue of $S$.
\begin{lemma}\label{lem3.2}
For $(\rho, m, U)\in\R^+\times\R^n\times\mathcal{S}_0^n$ let
\begin{equation}\label{eq3.9}
e(\rho, m, U):=\lambda_{\max}\left(\frac{m\otimes m}{\rho}-U \right).
\end{equation}
Then, for any given $\rho,\chi\in\R^+$,
\begin{enumerate}[(i)]
\item $e(\rho,\cdot, \cdot)\colon\R^n\times\mathcal{S}_0^n\to \R$ is convex;
\item $\frac{|m|^2}{n\rho}\leq e(\rho, m, U)$, with equality if and only if $U=\frac{m\otimes m}{\rho}-\frac{|m|^2}{n\rho}I_n$;
\item $\norm{U}_\infty\leq(n-1)e(\rho, m, U)$, where $\norm{U}_\infty$ is the operator norm of $U$;
\item the $\frac{\chi}{n}-$sublevel set of $e$ is the convex hull of $K_{\rho,\chi}$, namely,

\begin{equation}\label{eq3.10}
K_{\rho,\chi}^{co}=\left\{(m, U, q)\in\R^n\times\mathcal{S}_0^n\times\R^+: e(\rho, m, U)\leq\frac{\chi}{n}, q=p(\rho)+\frac{\chi}{n} \right\}
\end{equation}
and $K_{\rho,\chi}=K_{\rho,\chi}^{co}\cap\{|m|^2=\rho\chi\}$.
\end{enumerate}
\end{lemma}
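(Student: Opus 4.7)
The plan is to handle the four assertions in turn, each reducing to elementary spectral considerations for the symmetric matrix $S := \frac{m\otimes m}{\rho} - U$, whose largest eigenvalue is, by definition, $e(\rho,m,U)$.

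For (i), I would use the Rayleigh quotient formula $\lambda_{\max}(S) = \sup_{|v|=1}\langle Sv,v\rangle$ to write
\begin{equation*}
e(\rho,m,U) = \sup_{|v|=1}\left(\frac{(m\cdot v)^2}{\rho} - \langle Uv,v\rangle\right).
\end{equation*}
For each unit vector $v$ and fixed $\rho>0$ the expression in parentheses is the sum of a nonnegative quadratic in $m$ and a linear function of $U$, hence convex in $(m,U)$; a supremum of convex functions is convex. For (ii), the trace-zero condition $U\in\mathcal{S}_0^n$ gives $\operatorname{tr}(S) = |m|^2/\rho$, and since the largest eigenvalue dominates the eigenvalue average, $e(\rho,m,U)\ge|m|^2/(n\rho)$. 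Equality forces all eigenvalues of $S$ to coincide, so $S=\frac{|m|^2}{n\rho}I_n$, equivalently $U=\frac{m\otimes m}{\rho}-\frac{|m|^2}{n\rho}I_n$.

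For (iii), the defining inequality $S\preceq eI_n$ rewrites as $U\succeq\frac{m\otimes m}{\rho}-eI_n$, and because $m\otimes m$ is positive semidefinite, discarding it only weakens the bound to $U\succeq -eI_n$. Hence every eigenvalue $\mu$ of $U$ satisfies $\mu\ge -e$, and since $\operatorname{tr}(U)=0$ the sum of any $n-1$ of the eigenvalues is at least $-(n-1)e$, so the remaining (largest) eigenvalue is at most $(n-1)e$. Together with $|\lambda_{\min}(U)|\le e\le(n-1)e$ (using $n\ge 2$), this gives $\|U\|_\infty\le(n-1)e$.

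Part (iv) is the main obstacle. One inclusion is immediate: the right-hand side of \eqref{eq3.10} is convex by (i) and contains $K_{\rho,\chi}$ by (ii) (applied with equality on $K_{\rho,\chi}$), so it contains $K_{\rho,\chi}^{co}$. For the reverse inclusion, I would follow the wave-cone argument of De Lellis--Sz\'ekelyhidi and Chiodaroli: given $(m,U)$ with $e(\rho,m,U)\le\chi/n$, produce directions $(\bar m,\bar U)\in\Lambda$ along which $e$ stays bounded by $\chi/n$ on a segment whose endpoints satisfy $|m|^2=\rho\chi$, then iterate (or appeal to Carath\'eodory after identifying the extremal face as $K_{\rho,\chi}$) to express $(m,U)$ as a finite convex combination of points in $K_{\rho,\chi}$. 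Finally, the identity $K_{\rho,\chi}=K_{\rho,\chi}^{co}\cap\{|m|^2=\rho\chi\}$ follows from the equality case of (ii): any $(m,U,q)\in K_{\rho,\chi}^{co}$ with $|m|^2=\rho\chi$ satisfies $e(\rho,m,U)\le\chi/n=|m|^2/(n\rho)$, forcing equality in (ii) and hence the structural formulas defining $K_{\rho,\chi}$.
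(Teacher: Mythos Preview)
Your argument is correct and matches the standard approach: the paper does not supply its own proof but defers entirely to \cite[Lemma 3]{Delelissekhidieuler} and \cite[Lemma 3.2]{Chiodaroli}, and your spectral arguments for (i)--(iii) (Rayleigh quotient, trace average, eigenvalue bounds via the trace-zero constraint) as well as your outline for (iv) (convexity plus the equality case of (ii), together with the wave-cone/Carath\'eodory decomposition) are precisely the ingredients used in those references.
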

For the proof of the Lemma we refer to~\cite[Lemma 3]{Delelissekhidieuler} and~\cite[Lemma 3.2]{Chiodaroli}.


Finally, for any $\rho,\chi\in\R^+$, we define the hyperinterior of $K_{\rho,\chi}^{co}$:
\begin{equation}\label{eq3.11}
\mbox{hint }K_{\rho,\chi}^{co}:=\left\{(m, U, q)\in\R^n\times\mathcal{S}_0^n\times\R^+: e(\rho, m, U)<\frac{\chi}{n},\hspace{0.2cm} q=p(\rho)+\frac{\chi}{n}\right\}.
\end{equation}
\section{A criterion for the existence of admissible solutions} \label{criterion}
In this section we give some criteria to recognize initial data $m^0$ for which there exist infinitely many weak admissible solutions to \eqref{comm.euler}. Again, this section closely follows~\cite{Delelissekhidieuler} and~\cite{Chiodaroli}.
\begin{proposition}\label{Prop4.1}
Let $\Omega\subset \R^n$ be a bounded open set, $\rho_0\in C^1(\R^n)$ be a given density function with $\rho_0(x)=\overline{\rho}=\text{const}$ for $x\in \R^n\backslash\Omega$ and let $T>0$ be any finite time and  $\Omega'\supset\Omega$ be a bounded open set. Assume that there exist $(m_0, U_0, q_0)$ continuous solutions of
\begin{equation*}
\begin{cases}
\diverg_x m_0=0, \\
\partial_t m_0+\diverg_x U_0+\nabla_x q_0=0\quad\text{on}\quad\R^n\times(0, T)\\
\end{cases}
\end{equation*}
with $m_0\in C([0, T];H_w(\R^n))$, $\supp\left(m_0(\cdot, t), U_0(\cdot, t) \right)\subset\subset\Omega'$ for all $t\in (0, T),$ and a function $\chi\in C^\infty([0, T]; \R^+)$ such that
$$
e\left(\rho_0(x), m_0(x, t), U_0(x, t)\right)<\frac{\chi(t)}{n}
$$
for all $(x,t)\in \R^n\times(0, T)$,
$$
q_0(x, t)=p(\rho_0(x))+\frac{\chi(t)}{n}
$$
for all $(x, t)\in\R^n\times(0, T)$.
Then there exist infinitely many weak solutions $(\rho, m)$ of the system \eqref{com.euler} in $\R^n\times[0, T)$ with density $\rho(x)=\rho_0(x)$ and such that
\begin{equation}\label{4.4-4.6}
\begin{split}
&m\in C([0, T]; H_w(\R^n)),\\
&m(\cdot, t)=m_0(\cdot, t)\quad\text{for}\quad t=0, T\quad\text{and for a.e. $x\in\R^n$,}\\
&|m(x,t)|^2=\rho_0(x)\chi(t)\mathbbm{1}_{\Omega'}
\quad\text{for a.e}\quad (x,t)\in \R^n\times(0, T).
\end{split}
\end{equation}
\end{proposition}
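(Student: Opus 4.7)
The approach is the by-now standard Baire category framework of De Lellis--Sz\'ekelyhidi, adapted as in~\cite{Chiodaroli} to the semi-stationary compressible setting; the assumed strict subsolution $(m_0,U_0,q_0)$ plays the role of the ``starting point'' from which oscillatory perturbations living in the wave cone $\Lambda$ are added.

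\textbf{Step 1 (Space of subsolutions).} Define $X_0$ to be the set of triples $(m,U,q)$ with $(m,U)\in C(\R^n\times[0,T])$ satisfying the linear system \eqref{eqs3.1}, compactly supported in $\Omega'$, with $q=p(\rho_0)+\chi(t)/n$, and with the strict inequality
\begin{equation*}
e(\rho_0(x),m(x,t),U(x,t))<\frac{\chi(t)}{n}\quad\text{for all }(x,t)\in\R^n\times(0,T),
\end{equation*}
and agreeing with $(m_0,U_0)$ in a neighborhood of $\{t=0\}\cup\{t=T\}$ (so that the endpoint condition $m(\cdot,0)=m_0(\cdot,0)$, $m(\cdot,T)=m_0(\cdot,T)$ will pass to the limit). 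The given $(m_0,U_0,q_0)$ lies in $X_0$, so it is nonempty.

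\textbf{Step 2 (Metric and completion).} Equip $X_0$ with a metric $d$ compatible with $C([0,T];H_w(\R^n))$: pick a metric $d_H$ on bounded sets of $H(\R^n)$ that metrizes the weak $L^2$ topology and set
\begin{equation*}
d(m,\tilde m):=\sup_{t\in[0,T]}d_H\bigl(m(\cdot,t),\tilde m(\cdot,t)\bigr).
\end{equation*}
Let $X$ be the completion of $X_0$ under $d$; uniform $L^\infty$ bounds coming from Lemma~\ref{lem3.2}(iii) ensure that elements of $X$ are well-defined $m\in C([0,T];H_w(\R^n))$ supported in $\overline{\Omega'}$, satisfying the linearized system in the sense of distributions together with the relaxed inequality $e(\rho_0,m,U)\le \chi(t)/n$.

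\textbf{Step 3 (The target functional).} Define, for $m\in X$,
\begin{equation*}
I(m):=\int_0^T\!\!\int_{\Omega'}\bigl(\rho_0(x)\chi(t)-|m(x,t)|^2\bigr)\dd x\dd t.
\end{equation*}
By Lemma~\ref{lem3.2}(ii), $|m|^2/(n\rho_0)\le e(\rho_0,m,U)\le \chi/n$, so $I\ge 0$ on $X$, with $I(m)=0$ precisely when $|m|^2=\rho_0\chi\mathbbm 1_{\Omega'}$ a.e., i.e.\ when $(m,\rho_0)$ is a weak solution of \eqref{com.euler} in the desired form. Since $m\mapsto \int|m|^2$ is sequentially weakly lower semicontinuous, $I$ is upper semicontinuous on $X$; hence its set of points of continuity is a dense $G_\delta$ subset of the Baire space $X$.

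\textbf{Step 4 (Perturbation along the wave cone).} This is the main obstacle. I need to show: if $(m,U,q)\in X_0$ satisfies $I(m)\ge\eta>0$, then for every $\kappa>0$ there exists $(\tilde m,\tilde U,\tilde q)\in X_0$ with $d(m,\tilde m)<\kappa$ and
\begin{equation*}
\int_0^T\!\!\int_{\R^n}|\tilde m-m|^2\dd x\dd t\ge \beta(\eta)
\end{equation*}
for a modulus $\beta$ independent of $\kappa$. The construction proceeds by (i) using Lemma~\ref{lem3.2}(iv) to write the state $(m(x,t),U(x,t),q(x,t))$ as a convex combination of at most $N$ elements of $K_{\rho_0(x),\chi(t)}$, so that in particular there is a direction in $K_{\rho_0(x),\chi(t)}-K_{\rho_0(x),\chi(t)}$ of length comparable to $\sqrt{\chi/n-|m|^2/(n\rho_0)}$; (ii) invoking the plane-wave analysis associated with the wave cone $\Lambda$ from \eqref{eq3.8} (as in~\cite[Sect.~3]{Chiodaroli} and~\cite{Delelissekhidieuler}) to produce a compactly supported localized plane wave in $(x,t)$ whose mean is zero, whose $L^2$ norm of the $m$-component is of the prescribed size, and whose support can be made arbitrarily small; and (iii) freezing $\rho_0$ and $\chi$ on small parabolic cells using their continuity, then gluing the localized perturbations with a partition of unity in such a way that the resulting triple still satisfies the strict inequality $e<\chi/n$ (using convexity of $e$ in $(m,U)$, Lemma~\ref{lem3.2}(i)) and is compactly supported in $\Omega'\times(0,T)$. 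Weak smallness in $d$ comes from the high frequency / small support of the added oscillations, while the $L^2$ gain gives the quantitative size estimate.

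\textbf{Step 5 (Baire and infinitude).} A standard argument (cf.~\cite[Sect.~4]{Chiodaroli}) turns Step~4 into the statement that for each $\eta>0$ the set $\{m\in X: I(m)\ge\eta\}$ has empty interior: any such $m$ can be approximated in $d$ by $m'\in X_0$ with $\int|m'|^2>\int|m|^2+\beta(\eta)/2$, and so by upper semicontinuity $I$ cannot be continuous at $m$. Consequently every point of continuity of $I$ satisfies $I=0$; since the continuity set is a dense $G_\delta$ in $X$, and $X$ contains the starting subsolution $(m_0,U_0,q_0)$ together with uncountably many distinct perturbations of it (obtained by inserting oscillations of differing frequencies and supports already in the approximation process), we obtain infinitely many distinct $m\in X$ with $I(m)=0$. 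Each such $m$ yields, via Lemma~\ref{lem3.1} applied pointwise a.e., a weak solution of \eqref{com.euler} with density $\rho_0$, satisfying \eqref{4.4-4.6}.

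The genuinely hard step is Step~4: arranging a perturbation that is simultaneously (a)~localized in space-time so that the support is contained in $\Omega'\times(0,T)$ and the endpoint data are untouched, (b)~of definite $L^2$ size on the momentum component, and (c)~compatible with the strict subsolution inequality $e<\chi/n$ after adding it to the background. All the remaining steps are soft and formally identical to~\cite{Chiodaroli,Delelissekhidieuler}.
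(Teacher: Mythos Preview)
Your proposal is correct and follows essentially the same approach as the paper: both set up a space $X_0$ of strict subsolutions, take its closure $X$ in $C([0,T];H_w)$, use a Baire category argument to show that a residual set of $m\in X$ satisfies $|m|^2=\rho_0\chi\mathbbm{1}_{\Omega'}$, and identify such $m$ as weak solutions via Lemma~\ref{lem3.1}. The only cosmetic difference is that the paper phrases the Baire step through the identity map $(X,d)\to L^2$ being Baire-1 (its Lemma~\ref{lem4.4}), whereas you phrase it through upper semicontinuity of the defect functional $I$; these are standard equivalent variants, and your Step~4 is exactly the content of the paper's Lemma~\ref{lem4.5}.
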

\subsection{The space of subsolutions}
Let $m_0$ be a vector field as in Proposition \ref{Prop4.1} with associated modified pressure $q_0$, 
$$q_0=p(\rho_0)+\frac{\chi(t)}{n},$$
where $\rho_0$ and $\chi$ are given functions as in the assumptions of Proposition \ref{Prop4.1}. Consider momentum fields $m\colon\R^n\times[0, T]\to\R^n$ which satisfy
\begin{equation}\label{divfree4.7}
\diverg m=0,
\end{equation}
the initial conditions
\begin{equation}\label{con4.8}
\begin{split}
&m(x, 0)=m_0(x, 0),\\
&m(x, T)=m_0(x, T),\\
&\supp m(\cdot, t)\subset\Omega'\quad\text{for all}\quad t\in (0, T)
\end{split}
\end{equation}
 and such that there exists a continuous matrix field $U\colon\R^n\times (0, T)\to\mathcal{S}_0^n$ with
\begin{equation}\label{con4.9}
\begin{split}
&e\left(\rho_0(x), m(x, t), U(x, t)\right)<\frac{\chi(t)}{n}\quad\text{for all}\quad(x, t)\in \Omega'\times (0, T),\\
&\supp\left(U(\cdot, t)\right)\subset\Omega'\quad\text{for all}\quad t\in (0, T),\\
&\partial_t m+\diverg U+\nabla q_0=0\quad\text{in}\quad\R^n\times[0, T].
\end{split}
\end{equation}

\begin{definition}\label{subsol}
Let $X_0$ be the set of such momentum fields:
\begin{equation*}
X_0=\set{m\in C^0\left((0, T); C_c(\R^n)\right)\cap C\left([0, T]; H_w(\R^n)\right):\eqref{divfree4.7},\eqref{con4.8}, \eqref{con4.9}\quad\text{are satisfied}}
\end{equation*} and let $X$ be the closure of $X_0$ in $C\left([0, T]; H_w(\R^n)\right)$. Then $X_0$ is called the space of strict subsolutions.
\end{definition}
Let
\begin{equation*}
G=\sup_{t\in[0, T]}\chi(t)\int_{\Omega'}\rho_0(x)\dd x.
\end{equation*}
Since for any $m\in X_0$ with associated matrix field $U$, we have that (see Lemma \ref{lem3.2} (ii))
\begin{equation*}
\begin{split}
\int_{\R^n}|m(x, t)|^2\dd x&=\int_{\Omega'}|m(x, t)|^2\dd x\le\int_{\Omega'} n\rho_0(x)e\left(\rho_0(x), m(x, t), U(x, t)\right)\dd x\\&
\le\chi(t)\int_{\Omega'}\rho_0(x)\dd x\leq
G\quad\text{for all }t\in [0, T].
\end{split}
\end{equation*}
We can observe that $X_0$ consists of functions $m\colon[0, T]\to H(\R^n)$ taking values in a bounded subset $B=B_G(0)$ of $H(\R^n)$. Without loss of generality, we can assume that $B$ is weakly closed. Then $B$ is metrizable in its weak topology and, if we let $d_B$ be a metric on $B$ inducing the weak topology, we have that $(B, d_B)$ is a compact metric space. Moreover, we can define on $Y:=C([0, T], (B, d_B))$ a metric $d$ naturally induced by $d_B$ via
\begin{equation}\label{eq4.11}
d(f_1, f_2)=\max_{t\in [0, T]}d_B\left(f_1(\cdot, t), f_2(\cdot, t)\right).
\end{equation}
Note that the topology induced on $Y$ by $d$ is equivalent to the topology of $Y$ as a subset of $C([0, T]; H_w)$. In addition, the space $(Y, d)$ is complete. Finally, $X$ is the closure in $(Y, d)$ of $X_0$ and hence $(X, d)$ is as well a complete metric space. If $m\in X$ then $\supp(m(\cdot, t))\subset \overline{\Omega'}$ for any $t\in[0, T]$.

\begin{lemma}\label{lem4.3}
If $m\in X$ is such that $|m(x, t)|^2=\rho_0(x)\chi(t)\mathbbm{1}_{\Omega'}$ for a.e. $(x, t)\in\R^n\times(0, T)$, then the pair $(\rho_0, m)$ is a weak solution of \eqref{com.euler} in $\R^n\times[0, T)$ satisfying \eqref{4.4-4.6}.
\end{lemma}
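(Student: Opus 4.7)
The strategy is to realize $m\in X$ as a weak limit of strict subsolutions, pass to the limit in the \emph{linear} constraint equations, and then use the saturation condition $|m|^2=\rho_0\chi\,\mathbbm{1}_{\Omega'}$ together with the convex-analytic characterization in Lemma~\ref{lem3.2}(iv) to identify the weak limit $U$ with the nonlinear expression $\frac{m\otimes m}{\rho_0}-\frac{|m|^2}{n\rho_0}I_n$ required by Lemma~\ref{lem3.1}.

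First, by definition of $X$ there is a sequence $(m_k)\subset X_0$ with $m_k\to m$ in $(Y,d)$, i.e.\ $m_k(\cdot,t)\rightharpoonup m(\cdot,t)$ in $H(\R^n)$ uniformly in $t\in[0,T]$. Each $m_k$ comes with an associated $U_k$ satisfying \eqref{con4.9}; in particular $e(\rho_0,m_k,U_k)<\chi(t)/n$ pointwise, so Lemma~\ref{lem3.2}(iii) gives $\|U_k\|_\infty\le(n-1)\chi(t)/n$ uniformly. Passing to a subsequence, $U_k\rightharpoonup^* U$ in $L^\infty(\R^n\times(0,T);\mathcal S^n_0)$, and $\supp U_k\subset\overline{\Omega'}$ implies $\supp U\subset\overline{\Omega'}$. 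Since the constraint system
\begin{equation*}
\diverg_x m_k=0,\qquad \partial_t m_k+\diverg_x U_k+\nabla_x q_0=0
\end{equation*}
is linear, I can pass to the distributional limit to obtain $\diverg_x m=0$ and $\partial_t m+\diverg_x U+\nabla_x q_0=0$ in $\R^n\times(0,T)$.

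Next comes the key step: identifying $U$. The map $(m,U)\mapsto e(\rho_0,m,U)$ is convex by Lemma~\ref{lem3.2}(i), so it is sequentially weakly lower semicontinuous in $L^2\times L^2$; hence for a.e.\ $(x,t)$,
\begin{equation*}
e(\rho_0(x),m(x,t),U(x,t))\le\liminf_{k\to\infty}e(\rho_0(x),m_k(x,t),U_k(x,t))\le\frac{\chi(t)}{n}.
\end{equation*}
On $\Omega'$, the hypothesis $|m|^2=\rho_0\chi$ combined with Lemma~\ref{lem3.2}(ii) forces
\begin{equation*}
\frac{\chi(t)}{n}=\frac{|m(x,t)|^2}{n\rho_0(x)}\le e(\rho_0(x),m(x,t),U(x,t))\le\frac{\chi(t)}{n},
\end{equation*}
so equality holds throughout, and the equality clause in Lemma~\ref{lem3.2}(ii) yields $U=\frac{m\otimes m}{\rho_0}-\frac{|m|^2}{n\rho_0}I_n$ a.e.\ on $\Omega'\times(0,T)$. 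Outside $\Omega'$ one has $m=0$ a.e.\ (since $|m|^2\mathbbm 1_{\R^n\setminus\Omega'}=0$) and $U=0$ a.e., so the same algebraic identity holds trivially. Recalling $q_0=p(\rho_0)+\chi/n=p(\rho_0)+|m|^2/(n\rho_0)$ on $\Omega'$ (and $q_0=p(\overline\rho)+\chi/n$ elsewhere, where $m=0$), the triple $(m,U,q_0)$ meets the hypotheses \eqref{eqs3.1}--\eqref{eqs3.2} of Lemma~\ref{lem3.1}, which yields that $(\rho_0,m)$ is a distributional solution of \eqref{com.euler}.

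Finally, the conditions in \eqref{4.4-4.6} are inherited from $X$: $m\in C([0,T];H_w(\R^n))$ by construction of $Y$; the initial and terminal traces $m(\cdot,0)=m_0(\cdot,0)$ and $m(\cdot,T)=m_0(\cdot,T)$ pass to the weak limit because convergence in $(Y,d)$ is uniform in $t$ (so evaluation at fixed $t$ is continuous); and $|m|^2=\rho_0\chi\,\mathbbm 1_{\Omega'}$ is exactly the standing hypothesis. The main obstacle is the identification of $U$ in the previous paragraph: a priori, weak convergence $m_k\rightharpoonup m$ does not imply $m_k\otimes m_k\rightharpoonup m\otimes m$, and it is precisely the saturation $|m|^2=\rho_0\chi$ that, via convexity and the characterization of $K_{\rho_0,\chi}$ as extreme points of $K_{\rho_0,\chi}^{co}$ in Lemma~\ref{lem3.2}(iv), forces this identification without needing any compactness of $\{m_k\otimes m_k\}$.
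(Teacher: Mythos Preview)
Your argument follows the paper's route almost verbatim: approximate by $m_k\in X_0$, extract a weak-* limit $U$ of the associated $U_k$ via the uniform bound from Lemma~\ref{lem3.2}(iii), pass to the limit in the linear system, then use convexity together with the saturation $|m|^2=\rho_0\chi$ to land in $K_{\rho_0,\chi}$ and invoke Lemma~\ref{lem3.1}; the paper phrases the identification step through Lemma~\ref{lem3.2}(iv) while you use the equality clause in Lemma~\ref{lem3.2}(ii), which amounts to the same thing. The only imprecision is the displayed pointwise inequality $e(\rho_0,m,U)\le\liminf_k e(\rho_0,m_k,U_k)$: weak (or weak-*) convergence does not control pointwise $\liminf$, so you should instead argue---as the paper does implicitly---that for each $(x,t)$ the sublevel set $\{e(\rho_0(x),\cdot,\cdot)\le\chi(t)/n\}$ is closed and convex, hence preserved a.e.\ under weak limits (Mazur), which yields $e(\rho_0,m,U)\le\chi/n$ a.e.\ directly and the rest of your argument then goes through unchanged.
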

\begin{proof}
Let $m\in X$ be such that
\begin{equation*}
|m(x, t)|^2=\rho_0(x)\chi(t)\quad\text{for a.e.}\quad(x, t)\in\Omega'\times(0, T).
\end{equation*}
By density of $X_0$, there exists a sequence $\set{m_k}\subset X_0$ such that $m_k\overset{d}{\longrightarrow} m$ in $X$. For any $m_k\in X_0$, let $U_k$ be the associated smooth matrix field enjoying the properties~\eqref{con4.9}. By using Lemma \ref{lem3.2} (iii) and
\begin{equation*}
e\left(\rho_0(x), m_k(x, t), U_k(x, t)\right)<\frac{\chi(t)}{n},
\end{equation*}
the following pointwise estimate holds for the sequence $\set{U_k}$:
\begin{equation*}
|U_k(x, t)|\le(n-1)e\left(\rho_0(x), m_k(x, t), U_k(x, t)\right)<\frac{(n-1)\chi(t)}{n}.
\end{equation*}
Consequently,
\begin{equation*}
\norm{U_k}_\infty\le(n-1)\norm{e\left(\rho_0(\cdot), m_k(\cdot, t), U_k(\cdot, t)\right)}_\infty<\frac{(n-1)\chi(t)}{n}.
\end{equation*}
As a consequence, $\set{U_k}$ is uniformly bounded in $L^\infty(\R^n\times(0,T))$, and by possibly extracting a subsequence, we have
\begin{equation*}
U_k\overset{\ast}{\rightharpoonup}U\quad\text{in}\quad L^\infty(\R^n\times(0,T)).
\end{equation*}
Following \cite{Chiodaroli}, $\overline{\mbox{hint }K_{\rho_0, \chi}^{co}}=K_{\rho_0, \chi}^{co}$ is a convex and compact set by Lemma \ref{lem3.2} (i), (ii), (iii). Hence $m\in X$ with associated matrix field $U$ solves
\begin{equation*}
\begin{split}
&\diverg_xm=0,\\
&\partial_t m+\diverg_x U+\nabla_xq_0=0\quad\text{on}\quad\R^n\times[0, T]
\end{split}
\end{equation*}
and $(m, U, q_0)$ takes values in $K_{\rho_0,\chi}^{co}$ almost everywhere. If, in addition, $|m(x,t)|^2=\rho_0\chi(t)\mathbbm{1}_{\Omega'}$, then $(m, U, q_0)(x, t)\in K_{\rho, \chi}$ a.e. in $\R^n\times[0, T]$ (because $K_{\rho, \chi}^{co}\cap\set{|m|^2=\rho \chi}=K_{\rho, \chi}$). Lemma \ref{lem3.1} allows us to conclude that $(\rho_0, m)$ is a weak solution of \eqref{com.euler}.
Finally, since $m_k\to m$ in $C([0, T]; H_w(\R^n))$ and $|m(x, t)|^2=\rho_0(x)\chi(t)\mathbbm{1}_{\Omega'}$ for almost every $(x, t)\in \R^n\times(0, T)$, we see that $m$ satisfies also \eqref{4.4-4.6}. Lemma \ref{lem4.3} is proved.
\end{proof}

\begin{lemma}\label{lem4.4}
The identity map $I\colon(X, d)\to L^2([0, T]; H
)$ defined by $m\mapsto m$ is a Baire-1 map, and therefore the set of points of continuity is residual in $(X, d)$.
\end{lemma}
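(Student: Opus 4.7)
The plan is to realize $I$ as a pointwise limit of continuous maps, which gives it Baire-1 status, and then to invoke the classical Baire category theorem for Baire-1 maps from complete metric spaces to conclude residuality of the continuity points.

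First I would construct explicit continuous approximants $I_k\colon (X,d)\to L^2([0,T];H)$ by spatial mollification. Let $\eta_k$ be a standard smooth, compactly supported, solenoidal-preserving mollifier at scale $1/k$, and set
\begin{equation*}
I_k(m)(x,t)\defeq \bigl(m(\cdot,t)*\eta_k\bigr)(x).
\end{equation*}
Mollification commutes with $\diverg_x$, so $I_k(m)(\cdot,t)\in H(\R^n)$. Since every $m\in X$ satisfies $\supp m(\cdot,t)\subset\overline{\Omega'}$ and $\|m(\cdot,t)\|_{L^2}\le G^{1/2}$, the function $I_k(m)$ is supported in a fixed $1/k$-neighbourhood of $\Omega'$ and is uniformly bounded in $L^\infty$ by a constant depending only on $k$ and $G$.

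Second, I would verify continuity of each $I_k$. If $m_j\to m$ in $(X,d)$, then for every $t\in[0,T]$, $m_j(\cdot,t)\rightharpoonup m(\cdot,t)$ weakly in $L^2(\R^n)$, so testing against the $L^2$ function $\eta_k(x-\cdot)$ yields $I_k(m_j)(x,t)\to I_k(m)(x,t)$ pointwise. Combined with the uniform bound in $L^\infty$ and the uniform spatial support, the dominated convergence theorem (applied first in $x$, then in $t$) promotes this to convergence in $L^2([0,T];L^2)$. The uniformity in $t$ provided by the metric $d$ is what gives the dominating function in time.

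Third, I would establish pointwise convergence $I_k(m)\to I(m)=m$ in $L^2([0,T];H)$ for every $m\in X$. For each fixed $t$, the standard mollifier property gives $m(\cdot,t)*\eta_k\to m(\cdot,t)$ in $L^2(\R^n)$, while $\|m(\cdot,t)*\eta_k - m(\cdot,t)\|_{L^2}\le 2\|m(\cdot,t)\|_{L^2}\le 2 G^{1/2}$. Another application of dominated convergence in $t$ yields the claimed convergence. Thus $I$ is a pointwise limit of continuous maps, i.e., a Baire-1 map.

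Finally, since $(X,d)$ is a complete metric space (as noted after~\eqref{eq4.11}) and $L^2([0,T];H)$ is a metric space, the classical theorem on Baire-1 functions (whose proof proceeds by showing that for each $\varepsilon>0$ the set of points where the oscillation of $I$ exceeds $\varepsilon$ is closed with empty interior, and intersecting over $\varepsilon=1/n$) yields that the set of continuity points of $I$ is a dense $G_\delta$, hence residual in $(X,d)$. The main technical point to be careful about is the continuity of $I_k$: the metric $d$ only gives weak convergence in $L^2$ at each time, so one must exploit both the compact spatial supports (uniform in $m\in X$) and the uniform-in-$t$ nature of $d$-convergence to upgrade, after smoothing, to strong $L^2$ convergence in space-time.
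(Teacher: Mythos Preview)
Your proposal is correct and follows essentially the same approach as the paper, which does not give a self-contained argument but refers to~\cite{Chiodaroli} (and ultimately~\cite{Delelissekhidieuler}), where the Baire-1 property is established by precisely this spatial mollification scheme. Your write-up supplies the details that the paper omits; the only superfluous remark is the phrase ``solenoidal-preserving mollifier'', since convolution with any scalar kernel commutes with $\diverg_x$.
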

The proof of this lemma relies on Baire category arguments \cite{Oxtoby} which can be easily adapted to our case from  Lemma 4.4 in \cite{Chiodaroli}.
\subsection{Proof of Proposition \ref{Prop4.1}}
We aim to show that all points of continuity of the identity map correspond to solutions of \eqref{com.euler} satisfying the requirements of Proposition \ref{Prop4.1}. Continuity of the identity map (see Lemma \ref{lem4.4}) will then allow us to prove Proposition \ref{Prop4.1}, once we know that the cardinality of $X$ is infinite. In light of Lemma \ref{lem4.3}, for our purpose it suffices to prove the following claim.\\
{\bf Claim:} If $m\in X$ is a point of continuity of $I$, then
\begin{equation}\label{eq4.14}
|m(x, t)|^2=\rho_0(x)\chi(t)\mathbbm{1}_{\Omega'}\text{ for a.e } (x,t)\in \R^n\times(0, T).
\end{equation}
As in~\cite{Chiodaroli},~\eqref{eq4.14} is equivalent to 
\begin{equation}
\norm{m}_{L^2(\Omega'\times[0, T])}=\left(\int_{\Omega'}\int_0^T\rho_0(x)\chi(t)\dd t\dd x\right)^{\frac12},
\end{equation}
since for any $m\in X$, we have
\begin{equation*}
|m(x, t)|^2\le \rho_0(x)\chi(t)\mathbbm{1}_{\Omega'}(x)
\end{equation*}
for almost all $(x, t)\in \Omega'\times(0, T)$.
Thanks to this remark, the claim is reduced to the following lemma.
\begin{lemma}\label{lem4.5}
Let $\rho_0, \chi$ be given functions as in Proposition \ref{Prop4.1}. Then there exists a constant $\beta=\beta(n)>0$ such that, given $m\in X_0$, there exists a sequence $\set{m_k}\subset X_0$ with the following properties:
\begin{equation}\label{eq4.15}
\begin{split}
\norm{m_k}_{L^2(\Omega'\times[0, T])}&\geq\norm{m}_{L^2(\Omega'\times[0, T])}+\beta\left(\int_{\Omega'}\int_{0}^{T}\rho_0(x)\chi(t)d t\dd x-\norm{m}^2_{L^2(\Omega'\times[0, T])} \right)^2
\end{split}
\end{equation}
and $m_k\to m$ in $C([0, T], H_w(\Omega'))$.
\end{lemma}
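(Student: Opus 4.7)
The plan is to adapt the adding-oscillations step of convex integration from~\cite{Delelissekhidieuler,Chiodaroli}, replacing periodic plane waves by smoothly cut-off plane waves supported in $\Omega'\times(0,T)$; this is the only modification needed to accommodate the compact-support constraints~\eqref{con4.8}--\eqref{con4.9}. Given $m\in X_0$ with associated smooth matrix field $U$, I consider the strict pointwise gap
\[
G(x,t)\defeq \frac{\chi(t)}{n}-e\bigl(\rho_0(x),m(x,t),U(x,t)\bigr)>0,\quad (x,t)\in\Omega'\times(0,T).
\]
Lemma~\ref{lem3.2}(ii) gives $\rho_0(x)\chi(t)-|m(x,t)|^2\le n\rho_0(x)G(x,t)$, and Cauchy--Schwarz on the fixed set $\Omega'\times[0,T]$ then yields
\[
\left(\int_{\Omega'\times[0,T]}(\rho_0\chi-|m|^2)\dd x\dd t\right)^2 \le C\int_{\Omega'\times[0,T]}G^2\dd x\dd t,
\]
so it suffices to produce a perturbation whose $L^2$-gain is bounded below by $\int G^2$.

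At each $(x_0,t_0)\in\Omega'\times(0,T)$ I would invoke the standard wave-cone selection lemma used in the proof of Lemma~4.5 of~\cite{Chiodaroli} (originally from~\cite{Delelissekhidieuler}) to produce $(\bar m,\bar U,0)\in\Lambda$, a frequency $\xi\in\R^{n+1}\setminus\{0\}$ and an amplitude $a\in\R$ with both $(m\pm a\bar m, U\pm a\bar U, q_0)(x_0,t_0)$ in $K_{\rho_0(x_0),\chi(t_0)}^{co}$ and $|a\bar m|^2\ge c\,G(x_0,t_0)^2$. By~\eqref{eq3.7}--\eqref{eq3.8} this direction is realized as $\diverg_y N$ for a rank-one symmetric matrix $N=N(\bar m,\bar U,\xi)$, so the plane wave $y\mapsto N\sin(\ell\, y\cdot\xi)$ solves~\eqref{eqs3.1} exactly with $q\equiv 0$ for every $\ell\in\N$. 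Localizing by a cut-off $\phi\in C_c^\infty(\Omega'\times(0,T))$ and restoring exactness through the compactly-supported potential operator from~\cite{Delelissekhidieuler}, I obtain $(\bar m_\ell,\bar U_\ell)$ supported in $\Omega'\times(0,T)$ satisfying~\eqref{eqs3.1} with $q\equiv 0$, where the commutator error coming from $\nabla\phi$ is of lower order in $\ell$. Covering $\Omega'\times(0,T)$ by a locally finite family of such charts and summing with a smooth partition of unity gives $m_\ell\defeq m+\sum_i\bar m_\ell^{(i)}\in X_0$, since the strict inequality~\eqref{con4.9} survives uniformly in $\ell$ once the amplitudes are chosen small enough to stay inside the open set $\mbox{hint }K_{\rho_0,\chi}^{co}$.

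Finally, the oscillatory factor forces $\bar m_\ell\rightharpoonup 0$ in $L^2$ by Riemann--Lebesgue, so
\[
\|m_\ell\|_{L^2(\Omega'\times[0,T])}^2 = \|m\|_{L^2}^2 + \|\bar m_\ell\|_{L^2}^2 + o(1) \ge \|m\|_{L^2}^2 + c\int_{\Omega'\times[0,T]}G^2\dd x\dd t + o(1)
\]
as $\ell\to\infty$, which together with the Cauchy--Schwarz step above implies~\eqref{eq4.15} for $\ell$ large; the convergence $m_\ell\to m$ in $C([0,T];H_w(\Omega'))$ follows from $\bar m_\ell\rightharpoonup 0$ and the uniform $H$-bound exactly as in~\cite[Lemma~4.5]{Chiodaroli}. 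The main obstacle compared to the periodic setting of~\cite{Chiodaroli} is keeping $\bar m_\ell$ compactly supported inside $\Omega'$ while still solving~\eqref{eqs3.1} exactly; this is precisely what the potential operator provides, as it maps compactly supported data to compactly supported correctors, so no boundary leakage ever occurs and all the additional requirements defining $X_0$ are preserved.
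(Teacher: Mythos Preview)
Your approach matches the paper's, which itself gives no detailed argument and simply notes that the proof follows from Lemma~4.5 of~\cite{Chiodaroli} after minor changes. Your identification of the one genuine modification---localising the plane waves through the De Lellis--Sz\'ekelyhidi potential so that the perturbations are compactly supported inside $\Omega'\times(0,T)$---is exactly the point.

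There is, however, a reversed inequality in your reduction step. From Lemma~\ref{lem3.2}(ii) one has $|m|^2\le n\rho_0\,e(\rho_0,m,U)$, hence
\[
\rho_0\chi-|m|^2\;\ge\;\rho_0\chi-n\rho_0\,e(\rho_0,m,U)\;=\;n\rho_0\,G,
\]
not $\le$ as you wrote. Consequently your Cauchy--Schwarz conclusion $\bigl(\int(\rho_0\chi-|m|^2)\bigr)^2\le C\int G^2$ does not follow. The fix is to bypass $G$ entirely: the standard wave-cone selection (as in~\cite{Delelissekhidieuler,Chiodaroli}) already gives the pointwise gain $|a\bar m|^2\ge c\,(\rho_0\chi-|m|^2)^2$, and then Jensen's inequality on the bounded domain $\Omega'\times[0,T]$ yields
\[
\int_{\Omega'\times[0,T]}(\rho_0\chi-|m|^2)^2\dd x\dd t\;\ge\;\frac{1}{|\Omega'\times[0,T]|}\left(\int_{\Omega'\times[0,T]}(\rho_0\chi-|m|^2)\dd x\dd t\right)^2,
\]
which is precisely the estimate needed for~\eqref{eq4.15}. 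With this correction, the rest of your outline (cut-off, potential, Riemann--Lebesgue, uniform $H$-bound) goes through as stated.
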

The proof of the above Lemma can be observed after minor changes in Lemma 4.5 from~\cite{Chiodaroli}. Now, let us show how Lemma \ref{lem4.5} implies the claim. Let $m\in X$ be a point of continuity of $I$. Owing to the density of $X_0$ in $X$ and Lemma~\ref{lem4.5}, there exist sequences $\{m_k\}, \{\widetilde{m}_k\}\subset X_0$ such that  $m_k\overset{d}{\longrightarrow}m$, $\widetilde{m}_k\overset{d}{\longrightarrow}m$, and
\begin{equation*}
\begin{split}
\liminf_{k\to\infty}&\norm{\widetilde{m}_k}_{L^2(\Omega'\times[0, T])}^2\geq\liminf_{k\to\infty}\Bigg(\norm{m_k}^2_{L^2(\Omega'\times[0, T])}\\&+\beta\left(\int_{\Omega'}\int_{0}^{T}\rho_0(x)\chi(t)\dd t\dd x-\norm{m_k}^2_{L^2(\Omega'\times[0, T])} \right)^2\Bigg).
\end{split}
\end{equation*}
By the assumption, $I$ is continuous at $m$, which implies that both $m_k$ and $\widetilde{m}_k$ converge strongly to $m$ and
\begin{equation*}
\begin{split}
\norm{m}_{L^2(\Omega'\times[0, T])}&\geq\norm{m}_{L^2(\Omega'\times[0, T])}\\&\quad+\beta\left(\int_{\Omega'}\int_{0}^{T}\rho_0(x)\chi(t)\dd t\dd x-\norm{m}^2_{L^2(\Omega'\times[0, T])} \right)^2.
\end{split}
\end{equation*}
Therefore $\norm{m}^2_{L^2(\Omega'\times[0, T])}=\int_{\Omega'}\int_{0}^{T}\rho_0(x)\chi(t)\dd t\dd x$ and the claim is proved.\qed

\section{Construction of suitable initial data}
The aim of this section is to prove the existence of a subsolution in the sense of Definition \ref{subsol} for which we apply Proposition \ref{Prop4.1} to generate infinitely many solutions.
\begin{proposition}\label{p7.1}
Let $\rho_0\in C^1(\R^n; \R^+)$ be a function satisfying the conditions $\rho_0>0$ on $\R^n$ and $\rho_0(x)=\overline{\rho}$ constant on $\R^n\backslash\Omega$. Let $p(\rho_0)$ be $C^1$ function such that
$$
\int_{\Omega}p(\rho_0)\dd x=p(\overline{\rho})|\Omega|,
$$
$T>0$, and $\Omega'$ a bounded locally Lipschitz domain with $\Omega'\supset\supset\Omega$.

Then there exist $\widetilde{U}\colon\R^n\to\mathcal{S}^n_0$ and $\widetilde{m}(\cdot, t)\colon\R^n\to\R^n$ such that
\begin{equation*}
\partial_t\widetilde{m}+\diverg_x\widetilde{U}+\nabla_x{q}_0=0\quad\text{on}\quad\R^n\times\R,
\end{equation*}
\begin{equation*}
\supp(\widetilde{m}(\cdot, t),\widetilde{U}(\cdot))\subset\Omega'\quad\text{for any}\quad t\in[0, T],
\end{equation*}
\begin{equation}\label{7.2}
e(\rho_0(x), \widetilde{m}(x, t),\widetilde{U}(x, t))<\frac{{\chi}(t)}{n}\quad\text{for all }(x,t)\in\R^n\times[0, T),
\end{equation}
for any continuous function ${\chi}:[0,T)\to\R$ such that 
\begin{equation*}
{\chi}(t)>n{\lambda}(t):=n\norm{e(\rho_0(\cdot)), \widetilde{m}(\cdot, t), \widetilde{U}(\cdot)}_{L^\infty(\Omega')}
\end{equation*}
for every $t\in[0,T)$ and for
\begin{equation*}
{q}_0(x, t):=p(\rho_0(x))+\frac{{\chi}(t)}{n}\quad\text{for all }x\in\R^n\times\R.
\end{equation*}

\end{proposition}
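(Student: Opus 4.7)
The plan is to reduce the construction to two tools already in hand: Lemma~\ref{LemPWS} (compactly supported solutions of a Poisson equation) and Proposition~\ref{Prop.forU2} (inversion on smooth, compactly supported, mean-zero pressures). Since $\chi(t)/n$ is spatially constant, $\nabla_x q_0 = \nabla p_1$ with $p_1:=p(\rho_0)-\overline{p}$, so the problem reduces to finding $(\widetilde{m},\widetilde{U})$ compactly supported in $\Omega'$, with $\widetilde{U}$ valued in $\mathcal{S}_0^n$, satisfying
\begin{equation*}
\diverg_x \widetilde{m} = 0, \qquad \partial_t \widetilde{m} + \diverg_x \widetilde{U} = -\nabla p_1.
\end{equation*}
Note that the compatibility assumption $\int_\Omega p(\rho_0)\dd x = p(\overline{\rho})|\Omega|$ together with $p_1\equiv 0$ outside $\Omega$ gives $\int_{\R^n} p_1\dd x = 0$.

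I would fix $\eps>0$ small enough that $\overline{\Omega^\eps}\subset\Omega'$ and decompose $p_1 = p_1*\omega^\eps + p^\eps$ as in Lemma~\ref{LemPWS}. Both summands are compactly supported in $\Omega^\eps$ and have zero integral (convolution preserves integrals, and $\int p_1=0$). Feeding the smooth summand $p_1*\omega^\eps\in C_c^\infty(\Omega^\eps)$ into Proposition~\ref{Prop.forU2} yields a pair $(m^1,U^1)$ with $m^1$ linear in $t$, $U^1$ time-independent, both $C_c^\infty(\Omega')$, $\diverg m^1=0$, $U^1\in\mathcal{S}_0^n$, and $\partial_t m^1 + \diverg U^1 = -\nabla(p_1*\omega^\eps)$.

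For the rough remainder $p^\eps$, Proposition~\ref{Prop.forU2} does not apply (it requires $C^\infty$ data), so I would instead invoke Lemma~\ref{LemPWS} to get $u\in C_c^{2,\alpha}(\overline{\Omega^\eps})$ with $\Delta u = p^\eps$, and define
\begin{equation*}
U^2 := -\frac{n}{n-1}\Bigl(\nabla^2 u - \tfrac{p^\eps}{n}I_n\Bigr).
\end{equation*}
By inspection $U^2$ is symmetric, continuous, compactly supported in $\overline{\Omega^\eps}\subset\Omega'$, and traceless (since $\mathrm{tr}\,\nabla^2 u=\Delta u=p^\eps$); a one-line computation gives $\diverg U^2 = -\nabla p^\eps$. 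Setting $\widetilde{m}:=m^1$ and $\widetilde{U}:=U^1+U^2$ then yields the required pair, the two divergences adding to $-\nabla p_1$ while both contributions remain compactly supported in $\Omega'$, and $\widetilde{U}$ being time-independent as required.

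The pointwise bound~\eqref{7.2} comes essentially for free: since $\widetilde{m}$ and $\widetilde{U}$ are continuous and compactly supported in $\Omega'$ and $\rho_0$ is bounded away from zero, $\lambda(t):=\norm{e(\rho_0(\cdot),\widetilde{m}(\cdot,t),\widetilde{U}(\cdot))}_{L^\infty(\Omega')}$ is a finite continuous function of $t$, and the strict inequality $e<\chi(t)/n$ holds for any continuous $\chi$ with $\chi(t)>n\lambda(t)$ (the inequality being trivial outside $\Omega'$ where $\widetilde{m}=0=\widetilde{U}$ and hence $e=0$). The main technical step is the Hessian construction of $U^2$: it is precisely here that the compact support of $u$ provided by Lemma~\ref{LemPWS} is used in an essential way, since otherwise there would be no evident route to a compactly supported symmetric traceless matrix field with divergence $-\nabla p^\eps$ for the merely $C^1$ remainder $p^\eps$.
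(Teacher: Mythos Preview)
Your proof is correct and follows essentially the same approach as the paper: the same splitting $p_1 = p_1*\omega^\eps + p^\eps$, the Hessian construction from Lemma~\ref{LemPWS} for the rough part (your $U^2$ is exactly the paper's $U^{(1)}$ written in the compact form $-\tfrac{n}{n-1}(\nabla^2 u - \tfrac{p^\eps}{n}I_n)$), and Proposition~\ref{Prop.forU2} for the mollified part. The only cosmetic difference is that the paper also records the quadratic-in-$t$ bound $\lambda(t)\le C_1 t^2 + C_2$ (useful later for Theorem~\ref{Th3}), whereas you simply note that $\lambda$ is finite and continuous, which is all that Proposition~\ref{p7.1} itself requires.
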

\begin{proof}
Let $\eps>0$ be so small that $\overline{\Omega^\eps}\subset\Omega'$ and 
$p^\eps$ be given as in Lemma \ref{LemPWS}, i.e., $p^\eps=p(\rho_0)-p(\rho_0)*\omega^\eps$.
Then, this Lemma implies that there exists $u\in C^{2,\alpha}_c(\Omega')$ such that $\Delta u=p^\eps$ and $\supp u\subset\overline{\Omega^\eps}$.
Now, 
we define a matrix field by
\begin{equation*}
U_{ij}^{(1)}=-\frac{n}{n-1}\frac{\partial^2u}{\partial x_i\partial x_j}\quad i\neq j,
\end{equation*}
\begin{equation*}
U_{ii}^{(1)}=-\frac{n}{n-1}\frac{\partial^2u}{\partial x_i^2}+\frac{p^\eps}{n-1}.
\end{equation*}
Then  $U^{(1)}$ is obviously symmetric and
\begin{equation*}
\operatorname{tr}(U^{(1)})=-\frac{n}{n-1}(\Delta u-p^\eps)=0.
\end{equation*}
We show that
\begin{equation*}
\diverg U^{(1)}=-\nabla p^\eps.
\end{equation*}
Indeed, for fixed $i$, we have
\begin{equation*}
\begin{split}
-\diverg_x U_{i\cdot}^{(1)}&=\frac{n}{n-1}\frac{\partial^3 u}{\partial x_i^3}-\frac{\frac{\partial}{\partial x_i}p^\eps}{n-1}+\frac{n}{n-1}\frac{\partial}{\partial x_i}\Delta u-\frac{n}{n-1}\frac{\partial^3 u}{\partial x_i^3}\\&=\frac{\partial}{\partial x_i}p^\eps.
\end{split}
\end{equation*}
Next, note that $p_1*\omega^\eps$ is a smooth function with compact support satisfying the condition of Proposition \ref{Prop.forU2}, i.e.,
\begin{equation*}
\int_{\R^n}p_1*\omega^\eps(x)\dd x=0.
\end{equation*}
So Proposition \ref{Prop.forU2} implies that we can find a matrix field $U^{(2)}\in \mathcal{S}_0^n$ and a vector field $\widetilde{m}$ satisfying the following conditions:
\begin{itemize}
\item $(\widetilde{m}(t), U^{(2)})\in C_c^\infty(\Omega')$ for every $t\in\R$,
\item $\diverg_x \widetilde{m}=0$,
\item $\partial_t \widetilde{m}+\diverg_x U^{(2)}+\nabla(p_1*\omega^\eps)=0$.
\end{itemize}
Next, we define $\widetilde{U}:=U^{(1)}+U^{(2)}$, then we have
\begin{equation*}
\supp(\widetilde{m}(\cdot, t),\widetilde{U}(\cdot))\subset\Omega'\quad \text{for every}\quad t\in\R.
\end{equation*}
Besides, recalling
\begin{equation*}
{\lambda}(t)=\norm{e(\rho_0(\cdot)), \widetilde{m}(\cdot, t), \widetilde{U}(\cdot)}_{L^\infty(\Omega')}=\norm{\lambda_{\max}\left(\frac{\widetilde{m}\otimes \widetilde{m}}{\rho_0}-\widetilde{U} \right)}_{L^\infty(\Omega')},
\end{equation*}
then since $\widetilde{m}$ is linearly $t$-dependent and $\widetilde{U}$ is independent of $t$, we have 
\begin{equation*}
 |{\lambda}(t)|\leq C_1 t^2+C_2\quad \text{for some} \quad C_1, C_2>0 \quad \text{for any}\quad t\in\R.
\end{equation*}
This indicates that we can choose any continuous function ${\chi}$ on $\R$ satisfying ${\chi}(t)>n{\lambda}(t)$ to ensure \eqref{7.2}. 
So we get $(\widetilde{m},\widetilde{U})$ satisfying the required conditions, thereby completing the proof of Proposition \ref{p7.1}.
\end{proof}

\begin{proposition}\label{p7.3}
Let $\rho_0, p$ be continuously differentiable functions as in Proposition \ref{p7.1} and $\Omega'\supset\supset\Omega$ bounded Lipschitz. 
Also, let $T>0$ be any given time and $(\widetilde m,\widetilde{U},{q}_0)$ and ${\chi}$ be as in Proposition~\ref{p7.1}. Then there exists a pair $({m}_0, {U}_0)$ solving the system
\begin{equation}\label{p7.3.*}
\begin{split}
\diverg_x{m}_0&=0\\
\partial_t{m}_0+\diverg_x{U}_0+{q}_0&=0
\end{split}
\end{equation}
distributionally on $\R^n\times(0,T)$ enjoying the following properties:
$({m}_0, {U}_0, {q}_0)$ is continuous in $\R^n\times(0,T]$ and ${m}_0\in C([0,T]; H_w(\R^n))$,
\begin{equation}\label{p7.3.2}
\supp({m}_0(\cdot,0), {U}_0(\cdot,0))\subset\overline{\Omega'},
\end{equation}
\begin{equation}\label{p7.3.3}
\supp({m}_0(\cdot, t), {U}_0(\cdot,t))\subset\subset\Omega'\quad\text{for all }t\neq 0,
\end{equation}
\begin{equation}\label{p7.3.4}
{q}_0(x,t)=p(\rho_0(x))+\frac{{\chi}(t)}{n}\quad\text{for all }(x,t)\in\R^n\times[0,T],
\end{equation}
\begin{equation}\label{p7.3.5}
e(\rho_0(x), {m}_0(x,t), {U}_0(x,t))<\frac{{\chi}(t)}{n}\quad\text{for all }(x,t)\in \R^n\times(0, T].
\end{equation}
Furthermore,
\begin{equation}\label{p7.3.6}
|{m}_0(x,0)|^2=\rho_0(x){\chi}(0)\quad\text{a.e in}\quad\Omega'.
\end{equation}
\end{proposition}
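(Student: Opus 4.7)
The plan is to apply a Baire category argument analogous to the one used in Section~\ref{criterion} to prove Proposition~\ref{Prop4.1}, but with the saturation constraint \eqref{p7.3.6} enforced only at the single time slice $t=0$ rather than on all of $\Omega'\times(0,T)$. The strict subsolution $(\widetilde m,\widetilde U)$ furnished by Proposition~\ref{p7.1} will serve as the starting point, and the task is to perturb it to the boundary of the convex hull $K^{co}_{\rho_0,\chi(0)}$ exactly at $t=0$ while preserving the open condition \eqref{p7.3.5} everywhere for $t>0$.

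Concretely, I would first introduce an analogue $\widetilde{X}_0$ of the space of strict subsolutions from Definition~\ref{subsol}, consisting now of pairs $(m,U)$ with $m\in C([0,T];H_w(\R^n))$ that solve \eqref{p7.3.*} distributionally, satisfy $e(\rho_0,m,U)<\chi(t)/n$ pointwise on $\R^n\times(0,T]$, obey the pointwise bound $|m(x,0)|^2\le\rho_0(x)\chi(0)\mathbbm{1}_{\Omega'}$ at $t=0$, and respect the two distinct support conditions \eqref{p7.3.2} and \eqref{p7.3.3}. Proposition~\ref{p7.1} guarantees $\widetilde{X}_0\neq\emptyset$, and its closure $\widetilde{X}$ in $C([0,T];H_w(\R^n))$ is a complete metric space on which the trace map $m\mapsto m(\cdot,0)\in L^2(\Omega';\R^n)$ is Baire-$1$ by an adaptation of Lemma~\ref{lem4.4}, so its points of continuity form a residual subset.

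The technical heart of the proof is then a perturbation lemma modelled on Lemma~\ref{lem4.5}: for every $(m,U)\in\widetilde{X}_0$ there should exist a sequence $(m_k,U_k)\subset\widetilde{X}_0$ with $m_k\to m$ in $C([0,T];H_w(\R^n))$ and
\begin{equation*}
\|m_k(\cdot,0)\|_{L^2(\Omega')}^2 \ge \|m(\cdot,0)\|_{L^2(\Omega')}^2 + \beta\left(\int_{\Omega'}\rho_0(x)\chi(0)\,\dd x - \|m(\cdot,0)\|_{L^2(\Omega')}^2\right)^2.
\end{equation*}
Each perturbation would be built by adding plane-wave corrections in the wave cone $\Lambda$ (see~\eqref{eq3.8}), localized in small space-time balls centred at points of $\overline{\Omega'}\times\{0\}$, via the geometric lemma from~\cite{Delelissekhidieuler}. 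Because the temporal support of the perturbations can be shrunk arbitrarily, the strict inequality \eqref{p7.3.5} and the compact spatial support \eqref{p7.3.3} for $t>0$ are preserved; the standard Baire argument then yields $m_0\in\widetilde{X}$ with $\|m_0(\cdot,0)\|_{L^2(\Omega')}^2=\int_{\Omega'}\rho_0(x)\chi(0)\,\dd x$, which combined with the pointwise upper bound forces~\eqref{p7.3.6}.

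The main obstacle will be establishing this perturbation lemma while simultaneously controlling all three constraints: the quadratic $L^2$-gain at $t=0$, the preservation of the strict subsolution property on the whole of $(0,T]$, and the reconciliation of the two distinct support conditions \eqref{p7.3.2} and \eqref{p7.3.3}. The resolution is to work with a finite covering of $\overline{\Omega'}\times\{0\}$ by small space-time cylinders on which $(\widetilde m,\widetilde U)$ is nearly constant, to localize each plane-wave correction within such a cylinder using a partition of unity, and to let the spatial support of the perturbations touch $\partial\Omega'$ only asymptotically as the iteration proceeds, so that strict interior support is retained at every positive time.
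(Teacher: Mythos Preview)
Your overall strategy—start from the strict subsolution $(\widetilde m,\widetilde U)$, add localized plane-wave corrections whose temporal supports shrink to $t=0$, and saturate $|m(\cdot,0)|^2=\rho_0\chi(0)$ at the single time slice—is exactly right, and your perturbation lemma is the correct analogue of Lemma~\ref{lem4.5}. The gap is in the mechanism you propose for passing to the limit.

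You want to take the closure $\widetilde X$ of $\widetilde X_0$, observe that the trace $m\mapsto m(\cdot,0)$ is Baire-$1$, and pick $m_0$ to be a point of continuity. Such an $m_0$ will indeed satisfy~\eqref{p7.3.6}, but it is only an element of the \emph{closure} $\widetilde X$, not of $\widetilde X_0$: you have no control over which sequence in $\widetilde X_0$ approximates it. Consequently you cannot conclude that $(m_0,U_0)$ is continuous in $\R^n\times(0,T]$, nor that the \emph{strict} inequality~\eqref{p7.3.5} holds, nor that~\eqref{p7.3.3} holds; generic elements of $\widetilde X$ satisfy only the closed condition $e(\rho_0,m,U)\le\chi(t)/n$, and the associated matrix field $U$ is recovered merely as a weak-$*$ limit. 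Your remark that the temporal support of the perturbations can be shrunk arbitrarily concerns the perturbation step, not the limit object handed to you by Baire category.

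For this reason the paper (following~\cite{Delelissekhidieuler,Chiodaroli}) does \emph{not} run a Baire argument here. It instead builds an explicit Cauchy sequence $(m_k,U_k)$ by iterating the perturbation claim, with the $k$-th perturbation supported in $\Omega_k\times[0,2^{-k}T]$ for an exhausting sequence $\Omega_k\nearrow\Omega'$. This forces $(m_k,U_k)$ to \emph{stabilize} on every compact subset of $\Omega'\times(0,T)$, so the limit locally coincides with some $(m_{k_0},U_{k_0})$ there, which is what delivers continuity,~\eqref{p7.3.3}, and the strict inequality~\eqref{p7.3.5}. Strong $L^2$ convergence of $m_k(\cdot,0)$ (needed to pass~\eqref{p7.3.6} to the limit) is arranged separately by a mollifier trick: at each step one records a scale $\gamma_k$ with $\|m_k-m_k*\eta_{\gamma_k}\|_{L^2}<2^{-k}$, and then, exploiting the weak convergence already secured, chooses $m_{k+1}$ so that $\|(m_k-m_{k+1})*\eta_{\gamma_j}\|_{L^2}<2^{-k}$ for all $j\le k$. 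This simultaneous control—local stabilization for $t>0$ and quantitative strong convergence at $t=0$—is precisely what the bare Baire framework cannot supply.
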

\begin{proof}
Identity~\eqref{p7.3.4} is already satisfied by definition of $q_0$.

In analogy with Definition \ref{subsol} (see also \cite{Delelissekhidieuler} and \cite{Chiodaroli}), we consider the space $X_0$ defined as the set of continuous vector fields $m\colon\R^n\times[0,T)\to\R^n$ to which there exists a continuous matrix field $U:\R^n\times[0,T)\to\mathcal{S}_0^n$ such that
\begin{equation}\label{p7.3.7}
\begin{split}
\diverg_x{m}&=0\\
\partial_tm+\diverg_x U+{q}_0&=0
\end{split}
\end{equation}
\begin{equation*}
\supp(m-\widetilde{m})\subset\Omega'\times\left[0, \frac{T}{2}\right)
\end{equation*}
\begin{equation*}
U(\cdot, t)=\widetilde{U}(\cdot)\quad\text{for all  }t\in\left[\frac T2, T\right)
\end{equation*}
and
\begin{equation}\label{eq7.15}
e(\rho_0(x), m(x, t), U(x, t))<\frac{{\chi}(t)}{n}\text{ for all }(x, t)\in \Omega'\times[0, T).
\end{equation}
Note that $\widetilde{m}\in X_0$, where $\widetilde{m}$ is the vector field given by Proposition \ref{p7.1}. 
As before, we set $d$ to be a metrization of the convergence in $C([0,T);L^2_w(\Omega'))$, and $X$ to be the closure of $X_0$ w.r.t.\ this topology.

Now following \cite{Chiodaroli,Delelissekhidieuler} we use the following claim which can be verified by minor modifications in the proof of Lemma \ref{lem4.5}:

{\bf Claim:} Let $\emptyset\neq\Omega_0\subset\subset\Omega'$ be a given domain and $\delta>0$. For every $\alpha>0$ there exists $\beta>0$ such that the following holds: Let $m\in X_0$ with associated matrix field $U$ be such that
$$
\int_{\Omega_0}\left[|m(x, 0)|^2-(\rho_0(x){\chi}(0))\right]\dd x<-\alpha.
$$
Then, there exists a sequence $m_k\in X_0$ with associated matrix field $U_k$ such that
\begin{equation*}
\supp(m_k-m, U_k-U)\subset \Omega_0\times[0, \delta],
\end{equation*}
\begin{equation*}
m_k\overset{d}{\longrightarrow}m,
\end{equation*}
and
\begin{equation*}
\liminf_{k\to\infty}\int_{\Omega_0}|m_k(x,0)|^2\dd x\geq\int_{\Omega_0}|m(x, 0)|^2\dd x+\beta\alpha^2.
\end{equation*}

Next, fix an exhausting sequence of bounded open subsets $\Omega_k\subset\Omega_{k+1}\subset\Omega'$, each compactly contained in $\Omega'$, and such that $|\Omega_{k+1}\backslash \Omega_k|\leq 2^{-k}$. Let also $\eta_\eps$ be a standard mollifying kernel in $\R^n$ with $\supp\eta_\eps\subset B_\eps(0)$. In view of the claim above, we construct inductively a
sequence of momentum fields $m_k\in X_0$, associated matrix fields $U_k$ and a sequence of numbers $\gamma_k < 2^{-k}$ as follows.

Firstly let $m_1(x,t)=\widetilde{m}(x,t)$, $U_1(x,t)=\widetilde{U}(x)$ for all $(x,t)\in\R^n\times[0,T)$. After obtaining $(m_1, U_1),\dots,(m_k, U_k)$ and
$\gamma_k,\dots,\gamma_{k-1}$, we choose $\gamma_k<2^{-k}$ in such a way that
\begin{equation}\label{p7.3.11}
\sup_{t\in[0,T)}\|m_k-m_k*\eta_{\gamma_k}\|_{L^2(\Omega')}<2^{-k}.
\end{equation}
Next, we set
\begin{equation*}
\alpha_k=-\int_{\Omega_k}\left[|m_k(x,0)|^2-\rho_0(x){\chi}(0)\right]\dd x.
\end{equation*}
Note that because of \eqref{eq7.15} we have $\alpha_k>0$. Then we apply the claim with $\Omega_k$, $\alpha=\alpha_k$ and $\delta=2^{-k}T$ to obtain $m_{k+1}\in X_0$ and an associated smooth matrix field $U_{k+1}$ such that
\begin{equation}\label{p7.3.12}
\supp(m_{k+1}-m_k, U_{k+1}-U_k)\subset\Omega_k\times[0, 2^{-k}T],
\end{equation}
\begin{equation}\label{p7.3.13}
d(m_{k+1}, m_k)<2^{-k},
\end{equation}
\begin{equation}\label{p7.3.14}
\int_{\Omega_k}|m_{k+1}(x,0)|^2\dd x\geq\int_{\Omega_{k}}|m_{k}(x,0)|^2\dd x+\beta\alpha_k^2.
\end{equation}
Since $d$ induces the topology of $C([0,T); L_w^2(\Omega'))$ we can additionally prescribe that
\begin{equation}\label{p7.3.15}
\|(m_k-m_{k+1})*\eta_{\gamma_j}\|_{L^2(\Omega')}<2^{-k}\quad\text{for all }j\leq k\text{ for }t=0,
\end{equation}  because
\begin{equation*}
\|(m_k-m_{k+1})*\eta_{\gamma_j}(t=0)\|_{L^2(\Omega')}\leq d(m_{k+1}, m_k)<2^{-k}.
\end{equation*}
In view of \eqref{p7.3.13}, we derive the existence of a function ${m}_0\in C([0,T); H_w(\Omega'))$ such that $$m_k\overset{d}{\longrightarrow}{m}_0.$$
From \eqref{p7.3.12} we see that for any compact subset $A$ of $\Omega'\times(0, T)$ there exists $k_0$ such that $(m_k, U_k)|_A=(m_{k_0}, U_{k_0})|_A$ for all $k>k_0$.
So $(m_k, U_k)$ converges in $C_{loc}(\Omega'\times(0, T))$ to a  continuous pair $({m}_0, {U}_0)$ solving equation \eqref{p7.3.*} in $\R^n\times(0, T)$ and  satisfying~\eqref{p7.3.2},\eqref{p7.3.3},\eqref{p7.3.4},\eqref{p7.3.5}. In order to show that \eqref{p7.3.6} also holds for ${m}_0$, we observe that \eqref{p7.3.14} yields
$$
\alpha_{k+1}\leq \alpha_k-\beta\alpha_k^2+C|\Omega_{k+1}\backslash\Omega_k|\leq\alpha_k-\beta\alpha_k^2+C2^{-k},
$$ which implies that $\alpha_k\to 0$ as $k\to \infty$.
Furthermore,
\begin{equation*}
\begin{split}
0&\geq \int_{\Omega'}\left[|m_k(x,0)|^2-\rho_0(x){\chi}(0)\right]\dd x\\&
\geq-(\alpha_k+C_1|\Omega'\backslash \Omega_k|)\\&
\geq-(\alpha_k+C_12^{-(k-1)}),
\end{split}
\end{equation*}
because $|\Omega'\backslash\Omega_k|=\cup_{j=k}^\infty|\Omega_{j+1}\backslash\Omega_j|=\sum_{j=k}^\infty 2^{-j}=2^{-(k-1)}$. The latter two observations imply that
\begin{equation}\label{p7.3.17}
\lim_{k\to\infty}\int_{\Omega'}\left[|m_k(x, 0)|^2-\rho_0(x){\chi}(0)\right]\dd x=0.
\end{equation}
On the other hand, owing to \eqref{p7.3.11} and \eqref{p7.3.15} we can write for $t=0$ and for every $k$
\begin{equation*}
\norm{m_k-{m}_0}_{L^2}\leq\norm{m_k-m_k*\eta_{\gamma_k}}_{L^2}+\norm{m_k*\eta_{\gamma_k}-\overline{m}*\eta_{\gamma_k}}_{L^2}+\norm{\overline{m}*\eta_{\gamma_k}-\overline{m}}_{L^2}.
\end{equation*}
By construction, $\norm{m_k-m_k*\eta_{\gamma_k}}_{L^2}<2^{-k}$ and
\begin{equation*}
\begin{split}
\norm{m_k*\eta_{\gamma_k}-{m}_0*\eta_{\gamma_k}}_{L^2}&\leq\sum_{j=0}^\infty\norm{m_{k+j}*\eta_{\gamma_k}-m_{k+j+1}*\eta_{\gamma_k}}_{L^2}\\&\leq\sum_{j=0}^\infty2^{-(k+j)}=2^{-(k-1)}.
\end{split}
\end{equation*}
Thus we have
$$
\norm{m_k-{m}_0}_{L^2}\leq 2^{-k}+2^{-(k-1)}+\norm{{m}_0*\eta_{\gamma_k}-{m}_0}_{L^2}.
$$
Hence $\norm{m_k-{m}_0}_{L^2}\to 0$ as $k\to\infty$, i.e., $m_k(\cdot,0)\to{m}_0(\cdot, 0)$ strongly in $H(\Omega')$ as $k\to\infty$. Combining this with \eqref{p7.3.17} implies
$$
|{m}_0(x, 0)|^2=\rho_0(x){\chi}(0)\quad\text{for a.e. }x\in\Omega'
$$ which completes the proof of Proposition \ref{p7.3}.
\end{proof}
\section{Proof of the main results}\label{main}
\subsection{Proof of Theorem \ref{Th2}} Let $T$ be any finite positive time and $\rho_0\in C^1(\R^n)$ be a given density function as in Theorem \ref{Th2}. Further assume that $({m}_0, {U}_0, {q}_0)$ and $\chi$ is given by Proposition \ref{p7.3}. 
Then, $(m_0, U_0, q_0, \chi)$ fulfills the assumptions of Proposition \ref{Prop4.1}. Thus, there exist infinitely many solutions $m\in C([0, T), H_w(\R^n))$ of \eqref{com.euler} in $\R^n\times[0, T)$ with density $\rho_0$ such that
$$
m(x, 0)={m}_0(x, 0)\quad\text{for a.e.  }x\in \Omega'
$$
and
$$
|m(x, t)|^2=\rho_0(x)\chi(t)\mathbbm{1}_{\Omega'}\quad\text{a.e. in  }\R^n\times(0, T).
$$
Since  $|m_0(x, 0)|^2=\rho_0(x)\chi(0)$ a.e. in $\Omega'$ as well, it is enough to define $m^0(x)=m_0(x, 0)$ to satisfy \eqref{eq2.10} and hence conclude the proof. \qed
\subsection{Proof of Theorem \ref{Th3}} Under the assumptions of Theorem \ref{Th2}, we have shown the existence of a bounded initial momentum $m^0$ allowing for infinitely many solutions $m\in C([0, T]; H_w(\R^n))$ of \eqref{com.euler} on $\R^n\times[0, T)$ with density $\rho_0$. Moreover, according to Proposition \ref{p7.1}, for an arbitrary continuous function $\chi\colon\R\to\R^+$ with $\chi>n{\lambda}>0$, we have the following equalities:
\begin{equation}\label{eq16}
|m(x,t)|^2=\rho_0(x)\chi(t)\mathbbm{1}_{\Omega'}\quad\text{a.e. in  }\R^n\times[0, T),
\end{equation}
and in particular
\begin{equation}\label{eq17}
|m^0(x)|^2=\rho_0(x)\chi(0)\quad\text{a.e. in  }\Omega'.
\end{equation}
Now, we claim that there exist constants $C_1, C_2>0$ such that choosing the  function $\chi(t)>n{\lambda}$ on $[0, T)$ among solutions of the differential inequality
\begin{equation}\label{eq**}
\chi'(t)\leq-C_1\chi^{\frac12}(t)-C_2\chi^{\frac32}(t)
\end{equation}
yields weak solutions $(\rho_0,m)$ of \eqref{com.euler} (obtained through Theorem \ref{Th2}) that will also satisfy the admissibility condition \eqref{eq2.5ch} on $\R^n\times[0, T)$.

Suppose for the moment this claim is true. Then, one may simply choose $\chi$ to be the solution of the ordinary differential equation
\begin{equation*}
\chi'(t)=-C_1\chi^{\frac12}(t)-C_2\chi^{\frac32}(t)
\end{equation*}
with initial condition $\chi(0)=\chi^0$ sufficiently large so that $\chi$ which will remain greater than $n{\lambda}$ up to some positive time $\overline T$.

%
%

Finally, we aim to prove the claim. Since $m\in C([0, T]; H_w(\R^n))$ is divergence  free and fulfills \eqref{eq16},\eqref{eq17} and  $\rho_0$ is time independent, \eqref{eq2.5ch} reduces to the following inequality
\begin{equation}\label{eq8.29}
\frac{1}{2}\chi'(t)+m\cdot\nabla\left(\eps(\rho_0)+\frac{p(\rho_0)}{\rho_0}\right)+\frac{\chi(t)}{2}m\cdot\nabla\left(\frac{1}{\rho_0}\right)\leq 0
\end{equation}
intended in the sense of distributions on $\R^n\times[0, T)$. As $\rho_0\in C^1(\R^n)$ is bounded, there exists a constant $C_0^2$ with $\rho_0\leq C_0^2$ on $\R^n$, whence (see \eqref{eq16},\eqref{eq17})
\begin{equation}\label{mbound}
|m(x,t)|\leq C_0\sqrt{\chi(t)}\quad\text{a.e.\ on $\Omega'\times[0, T)$}. 
\end{equation}
Analogously we can find constants $c_1,c_2>0$ with
\begin{equation}\label{eq18}
\left|\nabla\left(\eps(\rho_0)+\frac{p(\rho_0)}{\rho_0}\right) \right|\leq c_1\quad\text{a.e.\ in $\Omega$}
\end{equation}
\begin{equation}\label{eq19}
\left|\nabla\left(\frac{1}{\rho_0}\right)\right|\leq c_2\quad\text{a.e.\ in $\Omega$}.
\end{equation}
As a consequence of \eqref{mbound},\eqref{eq18} and \eqref{eq19},  \eqref{eq8.29} holds as soon as $\chi$ satisfies
\begin{equation*}
\chi'(t)\leq-2c_1C_0\chi^{\frac12}(t)-c_2C_0\chi^{\frac32}(t)\quad\text{on }[0, T).
\end{equation*}
Therefore, by choosing $C_1:=2c_1C_0$ and $C_2:=c_2C_0$ we can conclude the proof of the claim. 
\qed

\subsection{Proof of Corollary \ref{Th1}} In analogy with \cite{Chiodaroli} note that the proof of Corollary~\ref{Th1} relies on Theorems \ref{Th2}-\ref{Th3}. Given a continuously differentiable initial density
$\rho^0$ we apply Theorems \ref{Th2}-\ref{Th3} for $\rho_0(x):= \rho^0(x)$ thus obtaining a positive time $\overline{T}$ (depending on $\norm{\rho^0}_{C^1}$) and a bounded initial momentum $m^0$ for which there exist infinitely many
solutions $m\in C([0, T];H_w(\R^n))$ of \eqref{com.euler} on $\R^n\times[0, \overline{T})$ with density $\rho^0$ and, additionally, the following holds:
\begin{equation}\label{eq20}
|m(x,t)|^2=\rho_0(x)\chi(t)\mathbbm{1}_{\Omega'}\quad\text{a.e. in  }\R^n\times[0, \overline{T}),
\end{equation}
\begin{equation}\label{eq21}
|m^0(x)|^2=\rho_0(x)\chi(0)\mathbbm{1}_{\Omega'}\quad\text{a.e. in  }\R^n
\end{equation}
for a suitable smooth function $\chi:[0, \overline{T}]\to\R^+$. Now, define $\rho(x, t) =
\rho_0(x)\mathbbm{1}_{[0, \overline{T})}(t)$. This indicates that \eqref{eq2.4ch} holds. Similarly to \cite{Chiodaroli} we observe that
$\rho$ is independent of $t$ and $m$ is weakly divergence-free for almost every
$0 < t < \overline{T}$. Therefore, the pair $(\rho, m)$ is a compactly supported weak solution of \eqref{comm.euler} with
initial data $(\rho^0, m^0)$. In the end, note that each solution obtained is also admissible. In fact, for $\rho(x, t) = \rho_0(x)\mathbbm{1}_{[0,\overline{T})}(t)$, \eqref{eq2.5ch} is
assured by Theorem \ref{Th3}. Corollary  \ref{Th1} is proved.\qed

\end{document}